\begin{document}

\title{Quasipolarity of Generalized Matrix Rings}

\author{Orhan Gurgun}
\address{Orhan Gurgun, Department of Mathematics, Ankara University,  Turkey}
\email{orhangurgun@gmail.com}

\author{Sait Halicioglu}
\address{Sait Hal\i c\i oglu,  Department of Mathematics, Ankara University, Turkey}
\email{halici@ankara.edu.tr}

\author{Abdullah Harmanci}
\address{Abdullah Harmanci, Department of Maths, Hacettepe University,  Turkey}
\email{harmanci@hacettepe.edu.tr}

\date{\empty}
\date{}
\newtheorem{thm}{Theorem}[section]
\newtheorem{lem}[thm]{Lemma}
\newtheorem{prop}[thm]{Proposition}
\newtheorem{cor}[thm]{Corollary}
\newtheorem{exs}[thm]{Examples}
\newtheorem{defn}[thm]{Definition}
\newtheorem{nota}{Notation}
\newtheorem{rem}[thm]{Remark}
\newtheorem{ex}[thm]{Example}

\maketitle

\begin{abstract} An element $a$ of a ring $R$ is called \emph{quasipolar} provided that there
exists an idempotent $p\in R$ such that $p\in comm^2(a)$, $a+p\in
U(R)$ and $ap\in R^{qnil}$. A ring $R$ is \emph{quasipolar} in
case every element in $R$ is quasipolar. In this paper, we
investigate quasipolarity of generalized matrix rings $K_s (R)$
for a commutative local ring $R$ and $s\in R$. We show that if $s$
is nilpotent, then $K_s(R)$ is quasipolar. We determine the
conditions under which elements of $K_s (R)$ are quasipolar. It is
shown that $K_s(R)$ is quasipolar if and only if $tr(A)\in J(R)$
or the equation $x^2-tr(A)x+det_s(A)=0$ is solvable in $R$ for
every $A\in K_s(R)$ with $det_s(A)\in J(R)$. Furthermore, we prove
that $M_2(R)$ is quasipolar if and only if $M_2(R)$ is strongly
clean for a commutative local ring $R$.
 \\[+2mm]
{\bf Keywords:} Quasipolar ring, local ring, generalized matrix
ring.
\thanks{ \\{\bf 2010 Mathematics Subject Classification:} 16S50,
16S70, 16U99}
\end{abstract}

\section{Introduction}
Throughout this paper all rings are associative with identity
unless otherwise stated. Following Koliha and Patricio \cite{KP},
the \emph{commutant} and \emph{double commutant} of $a\in R$ are
defined by $comm(a)=\{x\in R~|~xa=ax\}$, $comm^2(a)=\{x\in
R~|~xy=yx~\mbox{for all}~y\in comm(a)\}$, respectively. If
$R^{qnil}=\{a\in R~|~1+ax\in U(R)~\mbox{for every}~x\in comm(a)\}$
and $a\in R^{qnil}$,  then $a$ is said to be \emph{quasinilpotent}
\cite{H}. An element $a\in R$ is called \emph{quasipolar} provided
that there exists an idempotent $p\in R$ such that $p\in
comm^2(a)$, $a+p\in U(R)$ and $ap\in R^{qnil}$. A ring $R$ is
\emph{quasipolar} in case every element in R is quasipolar. Some
properties of quasipolar rings were studied in \cite{CC2, CC1, KP,
YC}.

Let $R$ be a ring, and let $s\in R$ be central. Following
Krylov~\cite{K}, we use $K_s(R)$ to denote the set $\{ [a_{ij}]\in
M_2(R)~|~\mbox{each}~a_{ij}\in R\}$ with the following operations:

 $$\begin{array}{c}
\left[
\begin{array}{cc}
a&b\\
c&d
\end{array}
\right]+\left[
\begin{array}{cc}
a'&b'\\
c'&d'
\end{array}
\right]=\left[
\begin{array}{cc}
a+a'&b+b'\\
c+c'&d+d'
\end{array}
\right],\\
\left[
\begin{array}{cc}
a&b\\
c&d
\end{array}
\right]\left[
\begin{array}{cc}
a'&b'\\
c'&d'
\end{array}
\right]=\left[
\begin{array}{cc}
aa'+sbc'&ab'+bd'\\
ca'+dc'&scb'+dd'
\end{array}
\right].
\end{array}$$

\noindent The element $s$ is called \textit{the multiplier of}
$K_s(R)$. The set $K_s(R)$ becomes a ring with these operations
and can be viewed as a special kind of Morita context. A Morita
context $(A, B, M, N, \psi, \phi)$ consists of two rings $A$ and
$B$, two bimodules $_AM_B$, $_BN_A$ and a pair of bimodule
homomorphisms $\psi: M\bigotimes_B N\rightarrow A$ and $\phi:
N\bigotimes_A M\rightarrow B$ which satisfy the following
associativity: $\psi(m\bigotimes n)m'=m\phi(n\bigotimes m')$ and
$\phi(n\bigotimes m)n'=n\psi(m\bigotimes n')$ for any $n, n'\in
N$, $m, m'\in M$. These conditions insure that the set $T$ of
generalized matrices $\left[%
\begin{array}{cc}
  a & m \\
  n & b \\
\end{array}%
\right]$; $a\in A$, $b\in B$, $m\in M$, $n\in N$ will form a ring,
called the ring of the \emph{Morita context}. A Morita context $\left[%
\begin{array}{cc}
  A & M \\
  N & B \\
\end{array}%
\right]$ with $A = B = M = N = R$ is called \textit{a generalized
matrix ring over} $R$. It was observed by Krylov \cite{K} that the
generalized matrix rings over $R$ are precisely these rings
$K_s(R)$ with $s\in C(R)$. When $s = 1$, $K_1(R)$ is just the
matrix ring $M_2(R)$, but $K_s(R)$ can be significantly different
from $M_2(R)$. In fact, for a local ring $R$ and $s\in C(R)$,
$K_s(R)\cong K_1(R)$ if and only if $s\in U(R)$ (see \cite[Lemma 3
and Corollary 2]{K} and \cite[Corollary 4.10]{TLZ}). Some
properties of the ring $K_s(R)$ is studied comprehensively by
Krylov and Tuganbaev   in \cite{KT}.

 In this paper, we study the
quasipolarity of the generalized matrix ring $K_s (R)$  over a
commutative local ring $R$. It is shown that $K_s(R)$ is
quasipolar if and only if $tr(A)\in J(R)$ or the equation
$x^2-tr(A)x+det_s(A)=0$ is solvable in $R$ for every $A\in K_s(R)$
with $det_s(A)\in J(R)$. This yields the main result of \cite{CC2}
for $s=1$. Furthermore, for $s\in U(R)$, we show that $K_s(R)$ is
quasipolar if and only if $K_s(R)$ is strongly clean. In
particular, a criterion for the quasipolarity of
$K_s\big(R[[x]]\big)$ is obtained. We see that if $s$ is a
nilpotent element in $R$, then $K_s(R)$ is quasipolar.

In what follows,  the ring of integers modulo $n$ is denoted by
$\mathbb{Z}_n$, and we write $M_n(R)$ (resp. $T_n(R)$) for the
rings of all (resp., all upper triangular) $n\times n$ matrices
over the ring $R$.  We write $R[[x]]$, $U(R)$, $C(R)$, $R^{nil}$
and $J(R)$ for the power series ring over a ring $R$, the set of
all invertible elements, the set of all central elements, the set
of all nilpotent elements and the Jacobson radical of $R$,
respectively. For a prime integer $p$, $\mathbb{Z}_{(p)}$ will be
the localization of $\mathbb{Z}$ at $p$ and the ring
$\widehat{\Bbb Z}_p$ denotes $p$-adic integers.

\section{Generalized Matrix Rings} In this section we study the quasipolarity of the generalized matrix ring
over a commutative local ring. The quasipolarity of $K_s(R)$
strictly depends on the values of $s$. Namely,  $s \in U(R)$ if
and only if $K_s(R)\cong M_2(R)$ for any local ring $R$ with $s
\in C(R)$. We prove that if  $s$ is a nilpotent element of $R$,
then $K_s(R)$ is always quasipolar. We supply  an example to show
that $K_s(R)$ is quasipolar if and only if $s\in R^{nil}$ (see
Example~\ref{ex1}). And if $s \in U(R)$, then $K_s(\Bbb Z_{(p)})$
is not quasipolar.

\begin{lem} \label{lem7} \cite[Lemma 1]{TZ} Let $R$ be a ring and let $s\in C(R)$. Then $\left[%
\begin{array}{cc}
 a & x\\
  y & b \\
\end{array}%
\right] \mapsto \left[%
\begin{array}{cc}
  b & y \\
  x & a \\
\end{array}%
\right]$ is an automorphism of $K_s(R)$.
\end{lem}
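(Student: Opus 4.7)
The plan is to verify directly that the prescribed map $\phi:K_s(R)\to K_s(R)$ given by $\phi\!\left(\!\left[\begin{array}{cc}a&x\\y&b\end{array}\right]\!\right) = \left[\begin{array}{cc}b&y\\x&a\end{array}\right]$ is a unital ring homomorphism, and then to observe bijectivity for free. Bijectivity is immediate because $\phi$ is an involution: applying the swap twice returns each entry to its original position, so $\phi\circ\phi=\mathrm{id}$. Preservation of addition is clear from the entrywise addition rule in $K_s(R)$, and $\phi$ manifestly fixes the identity matrix.

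The only point requiring actual computation is preservation of multiplication. Setting $M=\left[\begin{array}{cc}a&x\\y&b\end{array}\right]$ and $M'=\left[\begin{array}{cc}a'&x'\\y'&b'\end{array}\right]$, I would expand $MM'$ by the formula displayed in the introduction (which places the factor $s$ in the two diagonal entries), apply $\phi$ to the resulting matrix, and separately expand $\phi(M)\phi(M')$ using the same formula on the input $\phi(M),\phi(M')$. A position-by-position comparison then matches the four entries: for instance, the $(1,1)$ entry of $\phi(M)\phi(M')$ equals $bb'+syx'$, which is exactly the $(2,2)$ entry of $MM'$ that $\phi$ relocates into the $(1,1)$ slot; the other three entries match symmetrically.

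There is no real obstacle; the lemma is essentially bookkeeping. Centrality of $s$ enters only through the ambient assumption that $K_s(R)$ is a ring (so that the multiplication formula is associative) and is not invoked again in the comparison. As a side remark, when $s\in U(R)$ the automorphism $\phi$ is in fact inner, realized as conjugation by the matrix $P$ with zeros on the diagonal and ones off the diagonal (whose inverse in $K_s(R)$ is $s^{-1}P$, since $P^2=sI$); because the lemma does not assume $s$ invertible, however, the direct entrywise check is the appropriate route.
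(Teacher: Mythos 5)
Your direct entrywise verification is correct: the swap sends the $(2,2)$ entry $syx'+bb'$ of $MM'$ to the $(1,1)$ slot, which matches $bb'+syx'$ from $\phi(M)\phi(M')$, and the other three entries check out symmetrically, while the involution property gives bijectivity. The paper offers no proof of its own (it simply cites \cite[Lemma 1]{TZ}), and your computation is exactly the standard argument behind that citation; your side remark that the automorphism is inner via $P=\left[\begin{smallmatrix}0&1\\1&0\end{smallmatrix}\right]$ with $P^2=sI_2$ when $s\in U(R)$ is also correct.
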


If $R$ is a commutative ring with $s\in R$ and $A=\left[%
\begin{array}{cc}
 a & b\\
 c & d \\
\end{array}%
\right]\in K_s(R)$, we define $det_s(A)=ad-sbc$ and $tr(A)=a+d$,
and $rA=\left[%
\begin{array}{cc}
 ra & rb\\
 rc & rd\\
\end{array}%
\right]$ for $r\in R$ (see \cite{TZ}).

For elements $a,b$ in a ring $R$, we use the notation $a\sim b$ to
mean that $a$ \textit{is similar to} $b$, that is, $b=u^{-1}au$
for some $u\in U(R)$.

\begin{lem}\label{temel1}\cite[Lemma 14]{TZ} Let $R$ be a commutative ring with $s\in R$ and let $A, B\in K_s(R)$. The following hold:
\begin{itemize}
\item[(1)] $det_s(AB)=det_s(A)det_s(B)$.
\item[(2)] $A\in U\big(K_s(R)\big)$ if and only if $det_s(A)\in
U(R)$. In this case, if $A=\left[%
\begin{array}{cc}
 a & b\\
 c & d \\
\end{array}%
\right]$, then $A^{-1}= det_s(A)^{-1}\left[%
\begin{array}{rr}
 d & -b\\
 -c & a \\
\end{array}%
\right]$.
\item[(3)] If $A\sim B$, then $det_s(A)=det_s(B)$ and $tr(A)=tr(B)$.
\end{itemize}
\end{lem}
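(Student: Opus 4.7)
The entire lemma reduces to direct but careful computation in the multiplication of $K_s(R)$, exploiting the commutativity of $R$ at every step. My plan is to dispatch part (1) by hand, then deduce (2) from (1) plus a short verification, and finally obtain (3) as a corollary of (1) together with a ``trace-of-product'' identity.

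For part (1), I would write $A=\left[\begin{smallmatrix}a&b\\c&d\end{smallmatrix}\right]$ and $B=\left[\begin{smallmatrix}a'&b'\\c'&d'\end{smallmatrix}\right]$, compute $AB$ using the $K_s(R)$-multiplication rule, and expand
\[
\det_s(AB)=(aa'+sbc')(scb'+dd')-s(ab'+bd')(ca'+dc').
\]
Commuting factors freely in $R$, the cross terms $saa'cb'$ and $sab'ca'$ cancel, as do $sbc'dd'$ and $sbd'dc'$, leaving exactly $(ad)(a'd')-s(ad)(b'c')-s(bc)(a'd')+s^2(bc)(b'c')=(ad-sbc)(a'd'-sb'c')$. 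The only real obstacle here is bookkeeping: keeping track of eight terms and verifying four cancellations, all of which rely essentially on commutativity of $R$.

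For part (2), I would set $A^{\sharp}:=\left[\begin{smallmatrix}d&-b\\-c&a\end{smallmatrix}\right]$ and compute $AA^{\sharp}$ and $A^{\sharp}A$ directly from the multiplication table; commutativity of $R$ again collapses the off-diagonal entries to zero, and both products equal $\det_s(A)\cdot I_2$. Hence if $\det_s(A)\in U(R)$ the stated formula gives an honest two-sided inverse. Conversely, if $A\in U(K_s(R))$ with inverse $A^{-1}$, then by part (1), $\det_s(A)\det_s(A^{-1})=\det_s(I_2)=1$, so $\det_s(A)\in U(R)$. This part is essentially immediate once (1) is in hand.

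For part (3), write $B=U^{-1}AU$ for some $U\in U(K_s(R))$. The determinant assertion follows from (1) and commutativity of $R$: $\det_s(B)=\det_s(U^{-1})\det_s(A)\det_s(U)=\det_s(A)$. For the trace, I would first check the identity $\mathrm{tr}(XY)=\mathrm{tr}(YX)$ for any $X,Y\in K_s(R)$; expanding both sides from the multiplication rule shows that each diagonal entry of $XY$ matches the corresponding diagonal entry of $YX$ after using commutativity, so the two traces coincide. Applying this with $X=U^{-1}$ and $Y=AU$ and then using associativity gives $\mathrm{tr}(B)=\mathrm{tr}(U^{-1}(AU))=\mathrm{tr}((AU)U^{-1})=\mathrm{tr}(A)$. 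No step is genuinely hard; the only care needed is to notice that commutativity of $R$ (not of $K_s(R)$, which fails in general) is what makes the cyclic invariance of $\mathrm{tr}$ work in this generalized-matrix setting.
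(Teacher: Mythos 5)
Your proof is correct: all three computations (the eight-term expansion for $det_s(AB)$, the adjugate identity $AA^{\sharp}=A^{\sharp}A=det_s(A)I_2$, and the cyclic trace identity $tr(XY)=tr(YX)$, each relying on commutativity of $R$) check out. The paper does not prove this lemma but cites it from \cite[Lemma 14]{TZ}, and your direct verification is exactly the standard argument.
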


Note that $det_s(I_2+A)=1+tr(A)+det_s(A)$ and
$A^2-tr(A)A+det_s(A)I_2=0$ for any $A\in K_s(R)$.

We need Theorem \ref{teo1} in the sequel and give a short proof
for the sake of completeness.

\begin{thm}\label{teo1} Let $R$ be a commutative local ring with $s\in R$ and let $A\in K_s(R)$ such that
$det_s(A)\in J(R)$. Then $tr(A)\in J(R)$ if and only if $A\in
\big(K_s(R)\big)^{qnil}$.
\end{thm}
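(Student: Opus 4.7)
The plan is to reduce modulo $J(R)$: let $k=R/J(R)$, which is a field since $R$ is commutative local, and write $\bar{\cdot}$ for the images in $K_{\bar s}(k)$. Two tools suffice throughout: Lemma~\ref{temel1}(2), which says $B\in U\big(K_s(R)\big)$ iff $det_s(B)\in U(R)$, and the Cayley--Hamilton identity $B^2-tr(B)B+det_s(B)I_2=0$ recorded just before the theorem.

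For $(\Leftarrow)$, the idea is that every scalar matrix $rI_2$ lies in $comm(A)$. So if $A$ is quasinilpotent, $det_s(I_2+rA)=1+r\,tr(A)+r^2 det_s(A)$ must be a unit in $R$ for every $r\in R$. Assuming for contradiction that $tr(A)\notin J(R)$, locality of $R$ makes $tr(A)$ a unit; choosing $r=-tr(A)^{-1}$ collapses the expression to $r^2 det_s(A)\in J(R)$, a contradiction.

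For $(\Rightarrow)$, given $X\in comm(A)$, the identity $det_s(I_2+AX)=1+tr(AX)+det_s(A)det_s(X)$ combined with $det_s(A)\in J(R)$ reduces the job to proving $tr(\bar A\bar X)=0$ in $k$. Since $tr(\bar A)=0=det_{\bar s}(\bar A)$, Cayley--Hamilton yields $\bar A^2=0$, and commutation then gives $(\bar A\bar X)^2=\bar A^2\bar X^2=0$.

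The main (mild) obstacle is extracting zero trace from $(\bar A\bar X)^2=0$, because $K_{\bar s}(k)$ is not always an honest matrix ring. Since $R$ is local, $\bar s\in\{0\}\cup U(k)$, so I split cases: if $\bar s\neq 0$ then $K_{\bar s}(k)\cong M_2(k)$ and any nilpotent $2\times 2$ matrix over a field has zero trace; if $\bar s=0$, the equations $\bar a+\bar d=0$ and $\bar a\bar d=0$ in the field $k$ force the diagonal entries of $\bar A$ to vanish, after which the two diagonal entries of $\bar A\bar X$ in $K_0(k)$ are visibly zero (the cross terms that would contribute carry the factor $\bar s=0$). In either case $tr(AX)\in J(R)$, hence $det_s(I_2+AX)\in 1+J(R)\subseteq U(R)$, and Lemma~\ref{temel1}(2) completes the argument.
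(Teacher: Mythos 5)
Your proof is correct, and the forward direction takes a genuinely different route from the paper's. The $(\Leftarrow)$ direction is essentially the same in both: evaluate $det_s(I_2+rA)$ at the scalar matrices $rI_2\in comm(A)$ and use locality of $R$. For $(\Rightarrow)$, the paper argues that $tr(A),det_s(A)\in J(R)$ together with $A^2=tr(A)A-det_s(A)I_2$ give $A^2\in J\big(K_s(R)\big)$ (implicitly using $J(R)K_s(R)\subseteq J\big(K_s(R)\big)$, valid since $K_s(R)$ is a finitely generated $R$-algebra), and then factors $I_2-X^2A^2=(I_2-XA)(I_2+XA)$ to invert $I_2+XA$. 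You instead compute $det_s(I_2+AX)=1+tr(AX)+det_s(A)det_s(X)$ and pass to $K_{\bar s}(k)$ to show $tr(AX)\in J(R)$; this keeps the whole argument inside the trace/determinant identities of Lemma~\ref{temel1} and never needs the radical inclusion, at the cost of the case split on $\bar s$ (needed because $K_{\bar s}(k)$ is an honest matrix ring only when $\bar s\neq 0$). Both case analyses check out, including the observation that in $K_0(k)$ the diagonal entries of $\bar A\bar X$ vanish outright once $\bar A$ has zero diagonal. As a small simplification, your case split is avoidable: for any $\bar s$, if $N^2=0$ in $K_{\bar s}(k)$ then $det_{\bar s}(N)^2=det_{\bar s}(N^2)=0$ forces $det_{\bar s}(N)=0$, so Cayley--Hamilton gives $tr(N)N=0$ and hence $tr(N)=0$ (else $N=0$); this handles the trace-of-a-nilpotent step uniformly.
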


\begin{proof} Let $A\in K_s(R)$ such that $det_s(A)\in J(R)$. Assume that $tr(A)\in
J(R)$. Since $A^2-tr(A)A+det_s(A)I_2=0$, we have
$A^2=tr(A)A-det_s(A)I_2\in J\big(K_s(R)\big)$. Let $X\in comm(A)$.
Then $(I_2-XA)(I_2+XA)=I_2-X^2A^2\in U(R)$ and so $I_2-XA\in
U\big(K_s(R)\big)$ because $A^2\in J\big(K_s(R)\big)$. Hence $A\in
\big(K_s(R)\big)^{qnil}$. Conversely, suppose that $A\in
\big(K_s(R)\big)^{qnil}$ and let $x\in R$. Since $A\in
\big(K_s(R)\big)^{qnil}$, $I_2+xA\in U\big(K_s(R)\big)$ and so
$det_s(I_2+xA)=1+xtr(A)+x^2det_s(A)\in U(R)$. As $det_s(A)\in
J(R)$, we have $1+xtr(A)\in U(R)$. This gives $tr(A)\in J(R)$.
\end{proof}

\begin{lem} \label{lem5} Let $R$ be a ring and $s\in C(R)$. Then $A\in K_s(R)$ is
quasipolar if and only if $PAP^{-1}\in K_s(R)$ is quasipolar for
some $P\in U\big(K_s(R)\big)$.
\end{lem}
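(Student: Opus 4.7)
The plan is to prove that quasipolarity is invariant under conjugation by units, from which both directions of the lemma follow; taking $P=I_2$ makes one implication trivial, and applying the invariance to $P^{-1}$ yields the other. So the content reduces to: if $A$ is quasipolar then $B:=PAP^{-1}$ is quasipolar for every $P\in U(K_s(R))$.

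Given a quasipolar $A$ with witness idempotent $p\in comm^2(A)$ satisfying $A+p\in U(K_s(R))$ and $Ap\in \bigl(K_s(R)\bigr)^{qnil}$, I would set $q=PpP^{-1}$ and verify the four defining conditions one at a time. First, $q^2=Pp^2P^{-1}=q$ is immediate. Second, I would use the identity $comm(B)=P\cdot comm(A)\cdot P^{-1}$ (which follows from $Y B=BY \iff (P^{-1}YP)A=A(P^{-1}YP)$): any $Y\in comm(B)$ has the form $PXP^{-1}$ with $X\in comm(A)$, and then $qY=PpXP^{-1}=PXpP^{-1}=Yq$ because $p\in comm^2(A)$; hence $q\in comm^2(B)$. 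Third, $B+q=P(A+p)P^{-1}\in U(K_s(R))$, since $A+p$ is a unit and inner automorphisms carry units to units. Fourth, $Bq=PApP^{-1}$, so I must confirm that $(K_s(R))^{qnil}$ is closed under conjugation by units; this is the only step beyond formal manipulation.

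For that closure, let $x\in (K_s(R))^{qnil}$, $u\in U(K_s(R))$ and $W\in comm(uxu^{-1})$. Writing $W=uYu^{-1}$ with $Y\in comm(x)$, I compute
\[
I_2+(uxu^{-1})W \;=\; u(I_2+xY)u^{-1},
\]
which is a unit because $I_2+xY\in U(K_s(R))$ by the quasinilpotence of $x$. Thus $uxu^{-1}\in (K_s(R))^{qnil}$, and in particular $Bq=P(Ap)P^{-1}\in (K_s(R))^{qnil}$. All four conditions hold, so $B$ is quasipolar with witness $q$, completing the nontrivial direction. Applying the same argument with $P$ replaced by $P^{-1}$ yields the converse.

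I do not anticipate a genuine obstacle: the lemma is purely an invariance statement for the inner automorphism $X\mapsto PXP^{-1}$, and once the commutant identity $comm(PXP^{-1})=P\cdot comm(X)\cdot P^{-1}$ is noted, each of the three ingredients in the definition of quasipolarity (double-commutant membership, unit, quasinilpotent) transports along automatically.
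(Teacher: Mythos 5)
Your proof is correct and is essentially the argument the paper has in mind: the paper simply cites \cite[Lemma 2.3]{CC2} for the fact that quasipolarity is preserved under conjugation by a unit, and your write-up supplies the details of that invariance (idempotency, double-commutant membership via $comm(PAP^{-1})=P\,comm(A)\,P^{-1}$, units, and closure of the quasinilpotents under inner automorphisms), together with the observation that the ``some $P$'' formulation follows by taking $P=I_2$ in one direction and conjugating back by $P^{-1}$ in the other.
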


\begin{proof} It follows from  \cite[Lemma 2.3]{CC2}.
\end{proof}

For elements $a,b$ in a ring $R$, we say that \textit{$a$ is
equivalent to
$b$} if there exist $u,v\in U(R)$ such that $b=uav$. Recall that an element $\left[%
\begin{array}{cc}
 a & 0\\
  0 & b \\
\end{array}%
\right]$ is called \textit{a diagonal matrix of} $K_s(R)$.

\begin{lem} \label{lem3} \cite[Lemma 3]{TZ} Let $E^2=E\in K_s(R)$. If $E$ is equivalent to a diagonal matrix
in $K_s(R)$, then  $E$ is similar to a diagonal matrix in
$K_s(R)$.
\end{lem}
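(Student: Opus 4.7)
First I would reduce to the special case in which $E$ already has the explicit form $DW$, where $D$ is diagonal and $W\in U(K_s(R))$ satisfies the ``pseudoinverse'' identity $DWD=D$. Starting from $E=UDV$ with $U,V\in U(K_s(R))$ and $D$ diagonal, substitute into $E^2=E$ and cancel $U^{-1}$ on the left and $V^{-1}$ on the right; this yields $D(VU)D=D$. Setting $W:=VU$, we have $DWD=D$, and $U(DW)U^{-1}=UDV\cdot UU^{-1}=E$, so $E$ is already similar to $DW$ and it suffices to diagonalize $DW$ up to similarity.

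Next I would translate $DWD=D$ into scalar identities in $R$. Writing $D=\left[\begin{array}{cc}a&0\\0&b\end{array}\right]$ and $W=(w_{ij})$, a direct computation shows that $DWD=D$ is equivalent to the four equalities
\[
a^2 w_{11}=a,\qquad b^2 w_{22}=b,\qquad ab\,w_{12}=0,\qquad ab\,w_{21}=0.
\]
Setting $e:=aw_{11}$, $f:=bw_{22}$, $m:=aw_{12}$, $n:=bw_{21}$, so that $DW=\left[\begin{array}{cc}e&m\\n&f\end{array}\right]$, these four identities together with the commutativity of $R$ yield the Peirce-type relations
\[
e^2=e,\quad f^2=f,\quad em=me=m,\quad fn=nf=n,\quad en=ne=fm=mf=0,\quad mn=nm=0.
\]

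Finally, I would define the conjugator
\[
P=\left[\begin{array}{cc}1&-m\\-n&1\end{array}\right].
\]
Since $det_s(P)=1-s\,mn=1$, Lemma~\ref{temel1}(2) gives $P\in U(K_s(R))$ with $P^{-1}=\left[\begin{array}{cc}1&m\\n&1\end{array}\right]$. Using the Peirce relations, a short calculation shows $(DW)P=\left[\begin{array}{cc}e&0\\0&f\end{array}\right]$ and then $P^{-1}(DW)P=\left[\begin{array}{cc}e&0\\0&f\end{array}\right]$, so $E$ is similar via $UP$ to the diagonal matrix $\left[\begin{array}{cc}e&0\\0&f\end{array}\right]$ in $K_s(R)$.

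The main obstacle is step two: recognizing that the single matrix equation $DWD=D$ encodes precisely the Peirce-type relations needed for $P$ to act diagonally by conjugation. Once these relations are identified, the choice of $P$ is essentially forced by the requirement that conjugation by $P$ simultaneously annihilate both off-diagonal entries of $DW$ while preserving its idempotent diagonal entries.
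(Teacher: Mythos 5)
Your proof is correct, and since the paper does not prove this lemma at all (it simply quotes it as \cite[Lemma 3]{TZ}), your argument supplies a genuine self-contained proof rather than a variant of one in the text. I checked the computations in the $K_s(R)$ multiplication: from $E=UDV$ and $E^2=E$ one does get $DWD=D$ with $W=VU$ and $E=U(DW)U^{-1}$; writing $DW=\left[\begin{smallmatrix}e&m\\n&f\end{smallmatrix}\right]$ the identity $DWD=D$ unpacks (entrywise, using the twisted product) to $ea=a$, $fb=b$, $mb=0$, $na=0$, which give exactly the relations $em=m$, $fn=n$, $mf=ne=mn=nm=0$ needed; and with $P=\left[\begin{smallmatrix}1&-m\\-n&1\end{smallmatrix}\right]$ one indeed finds $(DW)P=P\left[\begin{smallmatrix}e&0\\0&f\end{smallmatrix}\right]=\left[\begin{smallmatrix}e&0\\0&f\end{smallmatrix}\right]$, so $(UP)^{-1}E(UP)$ is diagonal. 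This is essentially the same mechanism as Tang--Zhou's original argument. One caveat worth fixing: you invoke commutativity of $R$ (and Lemma~\ref{temel1}(2) for the invertibility of $P$), but the paper applies this lemma inside Lemma~\ref{lem2}, where $R$ is a local ring that is \emph{not} assumed commutative (only $s\in C(R)$). Your proof survives without commutativity if you keep the one-sided identities $aw_{11}a=a$, $bw_{22}b=b$, $aw_{12}b=0$, $bw_{21}a=0$ (which still yield $e^2=e$, $em=m$, $fn=n$, $mf=ne=mn=nm=0$) and verify directly that $\left[\begin{smallmatrix}1&m\\n&1\end{smallmatrix}\right]$ is a two-sided inverse of $P$ using $mn=nm=0$, rather than appealing to $det_s$.
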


\begin{lem} \label{lem2} Let $R$ be a local ring with $s\in C(R)$ and let $E$ be a non-trivial idempotent of $K_s(R)$.
Then we have the following.

\begin{itemize}
\item[(1)] If $s\in U(R)$, then $E\sim \left[%
\begin{array}{cc}
 1 & 0\\
  0 & 0 \\
\end{array}%
\right]$.
\item[(2)] If $s\in J(R)$, then either $E\sim \left[%
\begin{array}{cc}
 1 & 0\\
  0 & 0 \\
\end{array}%
\right]$ or $E\sim \left[%
\begin{array}{cc}
 0 & 0\\
  0 & 1 \\
\end{array}%
\right]$.
\end{itemize}
\end{lem}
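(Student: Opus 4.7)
The plan is to reduce to explicit conjugations after first pinning down the invariants of $E$. Writing $E=\left[\begin{array}{cc} a & b\\ c & d\end{array}\right]$ and combining the relation $E^2-tr(E)E+det_s(E)I_2=0$ with $E^2=E$, I would first show that any nontrivial idempotent satisfies $det_s(E)=0$ and $tr(E)=1$. By Lemma~\ref{temel1}(1), $det_s(E)^2=det_s(E^2)=det_s(E)$, so $det_s(E)$ is an idempotent of the local ring $R$ and therefore lies in $\{0,1\}$; the value $1$ would make $E$ a unit by Lemma~\ref{temel1}(2), forcing $E=I_2$. Taking traces of $E^2=E$ and using $tr(E^2)=tr(E)^2-2\,det_s(E)$ yields $tr(E)^2=tr(E)$, so $tr(E)$ is also idempotent, and the value $tr(E)=0$ together with $det_s(E)=0$ would give $E^2=0=E$. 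Nontriviality therefore forces $tr(E)=1$ and $det_s(E)=0$, so $a+d=1$ and $sbc=ad=a(1-a)$.

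Since $R$ is local and $a+d=1$, at least one of $a,d$ lies in $U(R)$. I treat the case $a\in U(R)$ first and set
$$P=\left[\begin{array}{cc} 1 & 0\\ -a^{-1}c & 1\end{array}\right],\qquad Q=\left[\begin{array}{cc} 1 & b\\ 0 & 1\end{array}\right],$$
both of which lie in $U\big(K_s(R)\big)$ by Lemma~\ref{temel1}(2), since their $det_s$ equals $1$. A direct computation in $K_s(R)$, exploiting $sbc=a(1-a)$, shows $PEP^{-1}=\left[\begin{array}{cc} 1 & b\\ 0 & 0\end{array}\right]$, and then $QPEP^{-1}Q^{-1}=\left[\begin{array}{cc} 1 & 0\\ 0 & 0\end{array}\right]$. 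If instead $d\in U(R)$, the symmetric construction with
$$P'=\left[\begin{array}{cc} 1 & -d^{-1}b\\ 0 & 1\end{array}\right],\qquad Q'=\left[\begin{array}{cc} 1 & 0\\ c & 1\end{array}\right]$$
produces $E\sim\left[\begin{array}{cc} 0 & 0\\ 0 & 1\end{array}\right]$. Together these cases establish assertion~(2).

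For assertion~(1), I assume $s\in U(R)$ and additionally show that the two diagonal idempotents $\left[\begin{array}{cc} 1 & 0\\ 0 & 0\end{array}\right]$ and $\left[\begin{array}{cc} 0 & 0\\ 0 & 1\end{array}\right]$ are similar in $K_s(R)$. To do so I display
$$V=\left[\begin{array}{cc} 0 & 1\\ s^{-1} & 0\end{array}\right]\in U\big(K_s(R)\big),$$
whose $det_s$ equals $-1$ (hence a unit by Lemma~\ref{temel1}(2)) and whose square $V^2=I_2$ yields $V^{-1}=V$. A direct check then gives $V\left[\begin{array}{cc} 0 & 0\\ 0 & 1\end{array}\right]V^{-1}=\left[\begin{array}{cc} 1 & 0\\ 0 & 0\end{array}\right]$. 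Combined with the outcomes of the preceding paragraph, both subcases collapse to $E\sim\left[\begin{array}{cc} 1 & 0\\ 0 & 0\end{array}\right]$, yielding~(1).

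The principal obstacle is the asymmetric appearance of the multiplier $s$ in the $K_s$-multiplication: it enters the $(1,1)$ and $(2,2)$ entries of products but is absent from the off-diagonals, so each conjugation must be computed with care. The key cancellation driving everything is $sbc=a(1-a)$ (respectively $sbc=d(1-d)$), which is precisely what is extracted from the idempotency of $E$. It is also worth noting that when $s\in J(R)$ the matrix $V$ fails to be invertible in $K_s(R)$, which is exactly why the disjunctive conclusion in~(2) cannot be strengthened in general.
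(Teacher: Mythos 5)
Your computations are all correct, and in the commutative setting your argument is a clean, self-contained alternative: you pin down $tr(E)=1$ and $det_s(E)=0$ first, then diagonalize by explicit one-sided conjugations, whereas the paper reads off that $a\in U(R)$ or $d\in U(R)$ directly from the entry equations of $E^2=E$, reduces $E$ to a diagonal matrix only up to \emph{equivalence}, and then invokes Lemma~\ref{lem3} to upgrade equivalence to similarity. Your route avoids that citation entirely, and your verification that $\left[\begin{smallmatrix}1&0\\0&0\end{smallmatrix}\right]\sim\left[\begin{smallmatrix}0&0\\0&1\end{smallmatrix}\right]$ via $V=\left[\begin{smallmatrix}0&1\\s^{-1}&0\end{smallmatrix}\right]$ when $s\in U(R)$ is exactly the paper's.

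The gap is that the lemma is stated for an arbitrary local ring $R$ with $s\in C(R)$, not a commutative one, and your opening paragraph does not survive the loss of commutativity. The function $det_s$ is only defined, and Lemma~\ref{temel1}(1)--(2) only proved, for commutative $R$; likewise the identity $tr(E^2)=tr(E)^2-2\,det_s(E)$ uses $ad+da=2ad$ and $sbc+scb=2sbc$. Even the later conjugation computations quietly commute elements (e.g.\ the $(2,1)$ entry of $PE$ is $-a^{-1}ca+c$, which vanishes only if $ac=ca$). This matters for the paper's logic: Lemma~\ref{lem2} is applied in Lemma~\ref{lem4}, which is stated for uniquely bleached local rings that need not be commutative. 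To repair it in full generality, extract $sbc=a-a^2$ (and the analogues) directly from the four entry equations of $E^2=E$, argue as the paper does that $a$ or $d$ must be a unit (else $E\in J\bigl(K_s(R)\bigr)$, forcing $E=0$), and either redo the conjugations with the noncommutative relations $ab+bd=b$, $ca+dc=c$ or fall back on the equivalence-plus-Lemma~\ref{lem3} route.
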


\begin{proof} Let $E=\left[%
\begin{array}{cc}
 a & b\\
  c & d \\
\end{array}%
\right]$ where $a,b,c,d\in R$. Since $E^2=E$, we have
$$a^2+sbc=a, ~scb+d^2=d, ~ab+bd=b, ~ca+dc=c.$$

\noindent If $a,d\in J(R)$, then $scb, sbc, b,c\in J(R)$ and so
$E\in J\big(K_s(R)\big)$. Hence $E=0$, a
contradiction. Since $R$ is local, we have $a\in U(R)$ or $d\in U(R)$. If $d\in U(R)$, then $\left[%
\begin{array}{cc}
 1 & -bd^{-1}\\
  0 & 1 \\
\end{array}%
\right]\left[%
\begin{array}{cc}
 a & b\\
  c & d \\
\end{array}%
\right]\left[%
\begin{array}{cc}
 1 & 0\\
  -d^{-1}c & d^{-1} \\
\end{array}%
\right]=\left[%
\begin{array}{cc}
 a-sbd^{-1}c & 0\\
  0 & 1 \\
\end{array}%
\right]$ and so $E$ is equivalent to a diagonal matrix. If $a\in
U(R)$, then we similarly show that $E$ is equivalent to a diagonal
matrix (see \cite[Lemma 4]{TZ}). According to Lemma~\ref{lem3},
there exists $P\in U\big(K_s(R)\big)$ such that $PEP^{-1}=\left[%
\begin{array}{cc}
 f & 0\\
  0 & g \\
\end{array}%
\right]$, where $f,g\in R$ are idempotents. Since $R$ is local and
$E\neq 0$, we see that either $f=1$ and $g=0$ or $f=0$ and $g=1$.
If $f=1$ and $g=0$, then $E\sim \left[%
\begin{array}{cc}
 1 & 0\\
  0 & 0 \\
\end{array}%
\right]$. Let $f=0$ and $g=1$. If $s\in U(R)$, then $\left[%
\begin{array}{cc}
 0 & 1\\
  1 & 0 \\
\end{array}%
\right]\left[%
\begin{array}{cc}
 0 & 0\\
  0 & 1 \\
\end{array}%
\right]\left[%
\begin{array}{cc}
 0 & s^{-1}\\
  s^{-1} & 0 \\
\end{array}%
\right]=\left[%
\begin{array}{cc}
 1 & 0\\
  0 & 0 \\
\end{array}%
\right]$ and so $E\sim \left[%
\begin{array}{cc}
 1 & 0\\
  0 & 0 \\
\end{array}%
\right]$. If $s\in J(R)$ and $E\sim \left[%
\begin{array}{cc}
 1 & 0\\
  0 & 0 \\
\end{array}%
\right]$, then it is easy to check that $s\in U(R)$, a
contradiction. Hence, if $s\in J(R)$, then either $E\sim \left[%
\begin{array}{cc}
 1 & 0\\
  0 & 0 \\
\end{array}%
\right]$ or $E\sim \left[%
\begin{array}{cc}
 0 & 0\\
  0 & 1 \\
\end{array}%
\right]$.
\end{proof}

\begin{lem}\label{lem4}Let $R$ be a uniquely bleached local ring and $s\in C(R)$. Then $A\in K_s(R)$ is quasipolar
if and only if either $A\in U\big(K_s(R)\big)$ or $A\in
\big(K_s(R)\big)^{qnil}$ or $A\sim \left[%
\begin{array}{cc}
 a & 0\\
 0 & b \\
\end{array}%
\right]$ where $a,b\in R$.
\end{lem}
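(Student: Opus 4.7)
The plan is to establish the equivalence by separate case analyses: for the forward direction on the idempotent witnessing quasipolarity, and for the reverse direction on which of the three alternatives holds.

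For the forward implication, I would fix an idempotent $E\in comm^2(A)$ with $A+E\in U\big(K_s(R)\big)$ and $AE\in \big(K_s(R)\big)^{qnil}$. If $E=0$ then $A=A+E\in U\big(K_s(R)\big)$, and if $E=I_2$ then $A=AE\in \big(K_s(R)\big)^{qnil}$. Otherwise $E$ is a nontrivial idempotent, and Lemma~\ref{lem2} places $E\sim \left[\begin{smallmatrix} 1 & 0 \\ 0 & 0\end{smallmatrix}\right]$ or $E\sim \left[\begin{smallmatrix} 0 & 0 \\ 0 & 1\end{smallmatrix}\right]$. Using the swap automorphism of Lemma~\ref{lem7} if needed, I focus on the first similarity, and by Lemma~\ref{lem5} I may replace $A$ by a conjugate whose corresponding idempotent is literally $\left[\begin{smallmatrix} 1 & 0 \\ 0 & 0\end{smallmatrix}\right]$. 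Since $comm^2(A)\subseteq comm(A)$, this matrix commutes with the new $A$; a direct $K_s$-multiplication then forces the off-diagonal entries of $A$ to vanish, giving $A\sim \left[\begin{smallmatrix} a & 0 \\ 0 & b\end{smallmatrix}\right]$.

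For the reverse implication, two cases are immediate: if $A\in U\big(K_s(R)\big)$, take $E=0$; if $A\in \big(K_s(R)\big)^{qnil}$, take $E=I_2$. The remaining case $A\sim \left[\begin{smallmatrix} a & 0 \\ 0 & b\end{smallmatrix}\right]$ reduces by Lemma~\ref{lem5} to $A=\left[\begin{smallmatrix} a & 0 \\ 0 & b\end{smallmatrix}\right]$. Since $R$ is local, each of $a,b$ lies in $U(R)\cup J(R)$. If both are units, $A$ itself is a unit; if both belong to $J(R)$, then $det_s(A)$ and $tr(A)$ lie in $J(R)$, so Theorem~\ref{teo1} gives $A\in \big(K_s(R)\big)^{qnil}$. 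In the mixed sub-case, after applying the swap automorphism of Lemma~\ref{lem7} if necessary, I may assume $a\in U(R)$ and $b\in J(R)$, and take $E=\left[\begin{smallmatrix} 0 & 0 \\ 0 & 1\end{smallmatrix}\right]$; then $A+E=\left[\begin{smallmatrix} a & 0 \\ 0 & b+1\end{smallmatrix}\right]\in U\big(K_s(R)\big)$ and $AE=\left[\begin{smallmatrix} 0 & 0 \\ 0 & b\end{smallmatrix}\right]\in \big(K_s(R)\big)^{qnil}$ by Theorem~\ref{teo1} are routine.

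The main obstacle is verifying $E\in comm^2(A)$ in this mixed sub-case. I would compute $comm(A)$ explicitly: for $X=\left[\begin{smallmatrix} x & y \\ z & w\end{smallmatrix}\right]$, the relation $XA=AX$ in $K_s(R)$ reduces, thanks to commutativity of $R$, to $y(a-b)=z(a-b)=0$, and since $a-b\in U(R)$ this forces $y=z=0$; hence $comm(A)$ is exactly the set of diagonal matrices of $K_s(R)$. A parallel $K_s$-computation shows that any element of $K_s(R)$ commuting with $\left[\begin{smallmatrix} 1 & 0 \\ 0 & 0\end{smallmatrix}\right]\in comm(A)$ must also be diagonal, so $comm^2(A)$ is contained in the diagonals and in particular contains $E$. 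The \emph{uniquely bleached} hypothesis does not appear to be invoked in this particular argument; it is the condition that will be needed for the applications of this characterization in what follows.
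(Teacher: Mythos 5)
Your argument is in substance the paper's own: the forward direction (conjugate so that the idempotent becomes $\left[\begin{smallmatrix}1&0\\0&0\end{smallmatrix}\right]$ and use the fact that it commutes with the conjugated $A$ to kill the off-diagonal entries) and the reverse direction (assemble the idempotent $\left[\begin{smallmatrix}e_1&0\\0&e_2\end{smallmatrix}\right]$ from quasipolar decompositions of $a$ and $b$ in $R$ --- your case split on $a,b\in U(R)$ versus $J(R)$ is the same thing, since the only idempotents of a local ring are $0$ and $1$) both track the paper's proof, and your explicit computation of $comm(A)$ supplies the detail the paper dismisses with ``it can be easily checked that $F\in comm^2(B)$.''

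There is, however, one genuine error: the closing claim that the uniquely bleached hypothesis ``does not appear to be invoked.'' The lemma is stated for a uniquely bleached local ring, which need not be commutative, and your reduction of $XA=AX$ to $y(a-b)=z(a-b)=0$ ``thanks to commutativity of $R$'' sits exactly where that hypothesis belongs. Without commutativity the off-diagonal conditions read $ay=yb$ and $za=bz$ with $a\in U(R)$ and $b\in J(R)$, i.e. $y\in\ker(l_a-r_b)$ and $z\in\ker(l_b-r_a)$, and it is the injectivity of these two maps --- the definition of uniquely bleached --- that forces $y=z=0$ and hence $E\in comm^2(A)$. By substituting commutativity you have proved the lemma only for commutative local rings (which is all the rest of the paper needs, but not what is stated). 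A smaller instance of the same slippage: Theorem~\ref{teo1} is stated only for commutative $R$, so your appeals to it to see that $\left[\begin{smallmatrix}a&0\\0&b\end{smallmatrix}\right]$ with $a,b\in J(R)$, and $AE=\left[\begin{smallmatrix}0&0\\0&b\end{smallmatrix}\right]$, are quasinilpotent should in the stated generality be replaced by the observation that these matrices lie in $J\big(K_s(R)\big)\subseteq\big(K_s(R)\big)^{qnil}$.
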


\begin{proof} Suppose that $A$ is quasipolar in $K_s(R)$. Then there
exists $E^2=E\in comm^2(A)$ such that $A+E=W\in U\big(K_s(R)\big)$
and $EA\in \big(K_s(R)\big)^{qnil}$. If $E=0$ or $E=I_2$, then
$A\in U\big(K_s(R)\big)$ or $A\in \big(K_s(R)\big)^{qnil}$,
respectively. Hence, by Lemma~\ref{lem2}, either $E\sim \left[%
\begin{array}{cc}
 1 & 0\\
  0 & 0 \\
\end{array}%
\right]$ or $E\sim \left[%
\begin{array}{cc}
 0 & 0\\
  0 & 1 \\
\end{array}%
\right]$. Assume that $E\sim \left[%
\begin{array}{cc}
 0 & 0\\
  0 & 1 \\
\end{array}%
\right]$. That is, there exists $P\in
U\big(K_s(R)\big)$ such that $PEP^{-1}=\left[%
\begin{array}{cc}
 1 & 0\\
  0 & 0 \\
\end{array}%
\right]$. According to Lemma~\ref{lem5},
$PAP^{-1}+PEP^{-1}=PWP^{-1}$ is a quasipolar decomposition in
$K_s(R)$.
Let $V=[v_{ij}]=PWP^{-1}$. Since $V\left[%
\begin{array}{cc}
 1 & 0\\
  0 & 0 \\
\end{array}%
\right]=\left[%
\begin{array}{cc}
 1 & 0\\
  0 & 0 \\
\end{array}%
\right]V$, we have $v_{12}=v_{21}=0$ and so $A\sim \left[%
\begin{array}{cc}
 a & 0\\
 0 & b \\
\end{array}%
\right]$ where $a,b\in R$. Conversely, it suffices to show that
$B=\left[%
\begin{array}{cc}
 a & 0\\
 0 & b \\
\end{array}%
\right]$ is quasipolar in $K_s(R)$ by Lemma \ref{lem5}. Since $R$
is a local ring, $R$ is quasipolar by \cite[Corollary 3.3]{YC}.
Then there exist $e_1^2=e_1\in comm^2(a)$ and $e_2^2=e_2\in
comm^2(b)$ such that $a+e_1\in U(R)$, $b+e_2\in U(R)$, $ae_1\in
R^{qnil}$ and
$be_2\in R^{qnil}$. Let $F=\left[%
\begin{array}{cc}
 e_1 & 0\\
 0 & e_2 \\
\end{array}%
\right]$. It can be easily check that $F\in comm^2(B)$ and if
$X\in comm(BF)$, then $I_2+XBF\in U\big(K_s(R)\big)$
and so $BF\in \big(K_s(R)\big)^{qnil}$. This implies that $B+F=\left[%
\begin{array}{cc}
 a+e_1 & 0\\
 0 & b+e_2 \\
\end{array}%
\right]$ is a quasipolar decomposition.
\end{proof}

\begin{lem} \label{lem8} Let $R$ be a local ring with $s\in J(R)\cap C(R)$. Let $A\in K_s(R)$ such that
$A\notin U\big(K_s(R)\big)$ and $A\notin \big(K_s(R)\big)^{qnil}$.
Then $A$ is similar to $\left[%
\begin{array}{cc}
 u & 1\\
 v & w \\
\end{array}%
\right]$ or $\left[%
\begin{array}{cc}
 w & 1\\
 v & u \\
\end{array}%
\right]$ where $u,v\in U(R)$ and $w\in J(R)$.
\end{lem}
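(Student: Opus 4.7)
First I extract the numerical consequences of the hypotheses. Since $A \notin U\big(K_s(R)\big)$ and $R$ is local, Lemma~\ref{temel1}(2) gives $det_s(A) \in J(R)$; since $A \notin \big(K_s(R)\big)^{qnil}$, Theorem~\ref{teo1} forces $tr(A) \in U(R)$. Writing $A = \left[\begin{array}{cc} a & b \\ c & d \end{array}\right]$, from $a + d \in U(R)$ at least one of $a, d$ is a unit, and from $det_s(A) = ad - sbc \in J(R)$ combined with $sbc \in J(R)$ (because $s \in J(R)$) we obtain $ad \in J(R)$, hence at least one of $a, d$ lies in $J(R)$ since $R$ is local. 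Consequently exactly one of $a, d$ is in $U(R)$ and the other in $J(R)$; in either case $a - d \in U(R)$.

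I treat the case $a \in U(R)$, $d \in J(R)$ (the case $a \in J(R)$, $d \in U(R)$ is fully parallel and produces the second normal form). The plan is to conjugate $A$ by three successive units of $K_s(R)$: first $E_{21}(y) = \left[\begin{array}{cc} 1 & 0 \\ y & 1 \end{array}\right]$ to make the $(2,1)$-entry a unit, then $E_{12}(x) = \left[\begin{array}{cc} 1 & x \\ 0 & 1 \end{array}\right]$ to make the $(1,2)$-entry a unit, and finally a diagonal matrix $P = \left[\begin{array}{cc} (b')^{-1} & 0 \\ 0 & 1 \end{array}\right]$ (with $b'$ the current $(1,2)$-entry) to rescale $(1,2)$ to $1$. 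A direct calculation in $K_s(R)$ gives
\begin{align*}
E_{21}(y) A E_{21}(-y) &= \left[\begin{array}{cc} a - sby & b \\ c + y(a-d) - sy^2 b & d + syb \end{array}\right], \\
E_{12}(x) A E_{12}(-x) &= \left[\begin{array}{cc} a + sxc & b - x(a-d) - sx^2 c \\ c & d - sxc \end{array}\right].
\end{align*}
Two observations make the reduction transparent: every diagonal perturbation carries a factor of $s \in J(R)$, so the $U/J$ class of the diagonal entries is preserved, and $E_{21}(y)$ leaves the $(1,2)$-entry unchanged while $E_{12}(x)$ leaves the $(2,1)$-entry unchanged.

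Using $a - d \in U(R)$, I take $y = 1$ if $c \in J(R)$ and $y = 0$ otherwise; in the former case $c + (a - d) - sb$ is a unit, in the latter $c$ already is. After this conjugation the $(2,1)$-entry is a unit while the rest of the $U/J$ pattern is intact. Next I take $x = 1$ or $x = 0$ analogously with respect to the current $(1,2)$-entry to upgrade it to a unit; the $(2,1)$-entry, being fixed by $E_{12}$, remains a unit. The matrix now has shape $\left[\begin{array}{cc} U & U \\ U & J \end{array}\right]$. Conjugating by $P$ rescales $(1,2)$ to $1$ and sends $(2,1)$ to the unit $c' b'$, and fixes the diagonal entries by commutativity of $R$, yielding the first normal form $\left[\begin{array}{cc} u & 1 \\ v & w \end{array}\right]$ with $u, v \in U(R)$ and $w \in J(R)$. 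The main point requiring care is the bookkeeping of the $U/J$ classes of the four entries under each successive conjugation, and this is exactly what the two observations above control.
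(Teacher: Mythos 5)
Your argument is correct and complete in the commutative case, and I checked the three conjugation formulas against the multiplication rule of $K_s(R)$: they are right, the $U(R)/J(R)$ classes of the diagonal entries are indeed preserved because every perturbation there carries a factor of $s\in J(R)$, and the choices $y\in\{0,1\}$, $x\in\{0,1\}$ do upgrade the off-diagonal entries to units since $a-d\in U(R)$ survives each step. The paper itself gives no proof here -- it only points to the proof of Lemma 5 of Tang--Zhou, which proceeds by essentially the same elementary conjugations -- so your writeup is a legitimate filling-in of that citation rather than a divergent route.

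One caveat on generality: the lemma is stated for a local ring $R$ that is not assumed commutative (only $s$ is central), whereas your opening paragraph extracts the $U/J$ pattern of the entries from Lemma~\ref{temel1}(2) and Theorem~\ref{teo1}, both of which the paper states only for commutative $R$ (indeed $det_s$ is only defined there). Your final rescaling also invokes commutativity, though that is harmless: $b_2^{-1}a_2b_2$ is a unit in any case. So as written your proof establishes the lemma for commutative local rings, which is all that the paper ever uses downstream (Theorems~\ref{teo6} and~\ref{teo4} assume commutativity). To get the statement at its stated generality you would need to replace the first step by a direct argument, e.g.\ using that for $s\in J(R)\cap C(R)$ one has $J\big(K_s(R)\big)=\left[\begin{array}{cc} J(R) & R\\ R & J(R)\end{array}\right]$, so that $A\notin U\big(K_s(R)\big)$ forces one diagonal entry into $J(R)$ and $A\notin\big(K_s(R)\big)^{qnil}$ forces the other into $U(R)$; the rest of your reduction then goes through unchanged.
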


\begin{proof} It is similar to the proof of  \cite[Lemma 5]{TZ}.
\end{proof}

\begin{lem} \label{lem6} Let $R$ be a commutative local ring with $s\in R$. Then every upper triangular matrix in
$K_s(R)$ is quasipolar.
\end{lem}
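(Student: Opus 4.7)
The plan is to write a generic upper triangular element as $A=\left[\begin{smallmatrix} a & b \\ 0 & d\end{smallmatrix}\right]$ with $a,b,d\in R$ and split on whether each diagonal entry lies in $U(R)$ or in $J(R)$, using that $R$ is local. Two of the four subcases are immediate: if $a,d\in U(R)$ then $det_s(A)=ad\in U(R)$, so by Lemma~\ref{temel1}(2) $A\in U(K_s(R))$ and the idempotent $p=0$ witnesses quasipolarity; if $a,d\in J(R)$ then both $tr(A)=a+d$ and $det_s(A)=ad$ lie in $J(R)$, so by Theorem~\ref{teo1} we have $A\in(K_s(R))^{qnil}$ and $p=I_2$ works.

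In the substantive subcase, exactly one of $a,d$ is a unit. By symmetry I focus on $a\in U(R)$ and $d\in J(R)$; the remaining mixed case is handled analogously by the mirror-image choice $p=\left[\begin{smallmatrix} 1 & (a-d)^{-1}b \\ 0 & 0\end{smallmatrix}\right]$. Here $a-d\in U(R)$, and I propose
\[p=\left[\begin{array}{cc} 0 & (d-a)^{-1}b \\ 0 & 1\end{array}\right].\]
A direct calculation with the $K_s(R)$-multiplication confirms $p^2=p$. One then checks that $det_s(A+p)=a(d+1)\in U(R)$, so $A+p\in U(K_s(R))$, and that $Ap$ is of the form $\left[\begin{smallmatrix} 0 & \ast \\ 0 & d\end{smallmatrix}\right]$, whose trace is $d\in J(R)$ and whose $det_s$ vanishes; Theorem~\ref{teo1} then places $Ap$ in $(K_s(R))^{qnil}$.

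The main obstacle is verifying $p\in comm^2(A)$. For this I would compute $comm(A)$ explicitly. Expanding $YA=AY$ for $Y=[y_{ij}]$ via the multiplication rule of $K_s(R)$ and using commutativity of $R$, the entrywise equations collapse to $sby_3=0$, $(a-d)y_3=0$, and $(a-d)y_2=b(y_1-y_4)$. Since $a-d\in U(R)$, these force $y_3=0$ and $y_2=(a-d)^{-1}b(y_1-y_4)$, while $y_1,y_4$ are free. Substituting this parametrization into $pY$ and $Yp$ and using commutativity of $R$ once more to commute the scalar $(d-a)^{-1}b$ past $y_1-y_4$, both products reduce to the same matrix, yielding $p\in comm^2(A)$ and completing the proof.
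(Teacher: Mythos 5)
Your proof is correct, but it takes a genuinely different route from the paper's. The paper also first disposes of the cases $\det_s(A)\in U(R)$ (so $A\in U(K_s(R))$) and $tr(A)\in J(R)$ (so $A\in (K_s(R))^{qnil}$ by Theorem~\ref{teo1}), but in the remaining case it conjugates $A=\left[\begin{smallmatrix} a & b\\ 0 & c\end{smallmatrix}\right]$ by $P=\left[\begin{smallmatrix} 1 & b(c-a)^{-1}\\ 0 & 1\end{smallmatrix}\right]$ to the diagonal matrix $\mathrm{diag}(a,c)$ and then invokes Lemma~\ref{lem5} (invariance of quasipolarity under similarity) together with Lemma~\ref{lem4}, whose converse direction builds the spectral idempotent of a diagonal matrix from the quasipolarity of the local ring $R$ itself. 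You instead exhibit the spectral idempotent of $A$ directly ($p=\left[\begin{smallmatrix} 0 & (d-a)^{-1}b\\ 0 & 1\end{smallmatrix}\right]$ when $a\in U(R)$, $d\in J(R)$, and its mirror image in the other mixed case), and verify $p^2=p$, $\det_s(A+p)\in U(R)$, $Ap\in (K_s(R))^{qnil}$ via Theorem~\ref{teo1}, and $p\in comm^2(A)$ by computing $comm(A)$ explicitly; I checked these computations against the $K_s(R)$ multiplication rule and they are all right (in particular $comm(A)$ is exactly the set of $Y$ with $y_{21}=0$ and $y_{12}=(a-d)^{-1}b(y_{11}-y_{22})$, and $p$ commutes with all such $Y$ by commutativity of $R$). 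What your approach buys is self-containedness: you bypass Lemmas~\ref{lem4} and~\ref{lem5} and the appeal to quasipolarity of local rings, at the cost of more hands-on matrix computation; the paper's conjugation argument is shorter given the machinery it has already set up, and the diagonalization it produces is reused elsewhere (e.g.\ in Theorem~\ref{teo2}).
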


\begin{proof} Let $A=\left[%
\begin{array}{cc}
 a & b\\
 0 & c \\
\end{array}%
\right]\in K_s(R)$ where $a,b,c\in R$. We can assume that
$det_s(A)=ac\in J(R)$ and $tr(A)=a+c\in U(R)$ by
Theorem~\ref{teo1}. This gives $c-a\in U(R)$. Choose $P=\left[%
\begin{array}{cc}
 1 & b(c-a)^{-1}\\
 0 & 1 \\
\end{array}%
\right]$. By Lemma~\ref{temel1}, $P\in U\big(K_s(R)\big)$,
and a direct calculation shows that $P^{-1}AP=\left[%
\begin{array}{cc}
 a & 0\\
 0 & c \\
\end{array}%
\right]$. Then $A$ is quasipolar from Lemma~\ref{lem5} and
Lemma~\ref{lem4}. \end{proof}

\begin{thm}\label{teo2}Let $R$ be a commutative local ring with $s\in R$ and let $A\in K_s(R)$ such that
neither $A\in U\big(K_s(R)\big)$ nor $A\in
\big(K_s(R)\big)^{qnil}$. Then the following statements are
equivalent:
\begin{enumerate}
\item $A$ is quasipolar in $K_s(R)$.
\item The equation $x^2-tr(A)x+det_s(A)=0$ is solvable in $R$.
\end{enumerate}
\end{thm}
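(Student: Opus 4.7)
The plan is to extract scalar data from the hypotheses and then treat the two implications separately. Since $A\notin U\big(K_s(R)\big)$ and $R$ is local, Lemma~\ref{temel1}(2) forces $det_s(A)\in J(R)$; since $A\notin \big(K_s(R)\big)^{qnil}$, Theorem~\ref{teo1} then forces $tr(A)\in U(R)$. I will also use that a commutative local ring $R$ is automatically uniquely bleached---for $a\in J(R)$ and $b\in U(R)$, the linear equation $(a-b)x=c$ has the unique solution $x=(a-b)^{-1}c$---so that Lemma~\ref{lem4} applies, and that scalar matrices $rI_2$ are central in $K_s(R)$ because $R$ is commutative.

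For (1)$\Rightarrow$(2), I would invoke Lemma~\ref{lem4} directly: since $A$ is quasipolar but neither a unit nor quasinilpotent, there are $a,b\in R$ with $A\sim \left[\begin{array}{cc} a & 0\\ 0 & b \end{array}\right]$. Lemma~\ref{temel1}(3) then gives $tr(A)=a+b$ and $det_s(A)=ab$, so $x^2-tr(A)x+det_s(A)=(x-a)(x-b)$ and $x=a$ is a root of the equation.

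For (2)$\Rightarrow$(1), let $r\in R$ satisfy $r^2-tr(A)r+det_s(A)=0$, and set $r'=tr(A)-r$, so $r+r'=tr(A)\in U(R)$ and $rr'=det_s(A)\in J(R)$. Because $R$ is local, exactly one of $r,r'$ lies in $U(R)$; after possibly interchanging $r$ and $r'$, I assume $r\in J(R)$ and $r'\in U(R)$, whence $r'-r\in U(R)$, and define
\[
E=(r'-r)^{-1}(r'I_2-A).
\]
Substituting the identity $A^2=(r+r')A-rr'I_2$ recorded after Lemma~\ref{temel1} into $(r'-r)^2E^2=(r'I_2-A)^2$ yields $E^2=E$, and the same identity gives $AE=rE$ and $A(I_2-E)=r'(I_2-E)$. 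Since $E$ is a polynomial in $A$ with central coefficients, $E\in comm^2(A)$. Next, for every $Y\in K_s(R)$ a direct computation yields $det_s(I_2+rY)=1+r\,tr(Y)+r^2 det_s(Y)\in U(R)$ (using $r\in J(R)$), so by Lemma~\ref{temel1}(2), $rI_2\in J\big(K_s(R)\big)$; hence $AE=rE\in J\big(K_s(R)\big)\subseteq \big(K_s(R)\big)^{qnil}$. Finally $A=rE+r'(I_2-E)$, so $A+E=(r+1)E+r'(I_2-E)$ with $r+1,r'\in U(R)$, and orthogonality of the idempotents $E$ and $I_2-E$ delivers the explicit inverse $(r+1)^{-1}E+r'^{-1}(I_2-E)$. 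This produces the required quasipolar decomposition of $A$.

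The main technical point is the choice of the spectral idempotent $E$: once the eigenvalue relation $AE=rE$ is in hand, quasinilpotency of $AE$ reduces to the centrality of $rI_2$ together with $rI_2\in J\big(K_s(R)\big)$, and invertibility of $A+E$ reduces to a two-term Peirce decomposition. The only subtlety is verifying that $r'-r$ is a unit, which is automatic because $R$ is local and $r\in J(R), r'\in U(R)$.
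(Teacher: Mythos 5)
Your proof is correct. The direction (1)$\Rightarrow$(2) is exactly the paper's argument: apply Lemma~\ref{lem4} (after noting that a commutative local ring is uniquely bleached) to diagonalize $A$ up to similarity and read off a root. The direction (2)$\Rightarrow$(1), however, is genuinely different from the paper's. The paper splits the roots into $a\in U(R)$, $b\in J(R)$, uses the flip automorphism of Lemma~\ref{lem7} to arrange $a_{11}\in U(R)$, conjugates by a unipotent matrix to reach upper triangular form, and then invokes Lemma~\ref{lem6} (which in turn rests on Lemmas~\ref{lem5} and~\ref{lem4} and the quasipolarity of the local ring $R$). You instead build the spectral idempotent explicitly as $E=(r'-r)^{-1}(r'I_2-A)$ from the roots $r\in J(R)$, $r'\in U(R)$, and verify the three defining conditions by direct computation with the Cayley--Hamilton identity $A^2=tr(A)A-det_s(A)I_2$. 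All your computations check out: $rr'=det_s(A)$ follows from $r$ being a root, $r'-r\in U(R)$ since $R$ is local, the idempotency and eigenvalue relations $AE=rE$, $A(I_2-E)=r'(I_2-E)$ are correct, centrality of scalar matrices gives $E\in comm^2(A)$ since $E$ is a polynomial in $A$, and $rI_2\in J\big(K_s(R)\big)$ gives $AE\in\big(K_s(R)\big)^{qnil}$. Your route is more self-contained and arguably cleaner: it bypasses the similarity and triangularization machinery of Lemmas~\ref{lem5}--\ref{lem8}, yields an explicit inverse for $A+E$, and avoids the mildly delicate point in the paper's conjugation step (the invertibility of $a-a_{22}$). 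What the paper's approach buys in exchange is reusability of the intermediate lemmas (notably Lemma~\ref{lem6} and Lemma~\ref{lem8}) in the later theorems of the section.
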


\begin{proof}$(1)\Rightarrow (2)$ Assume that $A\in K_s(R)$ is quasipolar. Since $A\notin U\big(K_s(R)\big)$ and
$A\notin \big(K_s(R)\big)^{qnil}$, $A\sim B=\left[%
\begin{array}{cc}
 a & 0\\
 0 & b \\
\end{array}%
\right]$ where $a,b\in R$ by Lemma~\ref{lem4}. According to
Lemma~\ref{temel1}, $tr(A)=tr(B)$ and $det_s(A)=det_s(B)$. This
gives $x^2-tr(A)x+det_s(A)=x^2-tr(B)x+det_s(B)$. Since
$a^2-tr(B)a+det_s(B)=0$, the equation $x^2-tr(A)x+det_s(A)=0$ is
solvable in $R$.

$(2)\Rightarrow (1)$ Let $A=\left[%
\begin{array}{cc}
 a_{11} & a_{12}\\
 a_{21} & a_{22} \\
\end{array}%
\right]\in K_s(R)$ and suppose that the equation
$x^2-tr(A)x+det_s(A)=0$ has roots $a,b\in R$. Since $A\notin
U\big(K_s(R)\big)$ and $A\notin \big(K_s(R)\big)^{qnil}$,
$det_s(A)=a_{11}a_{22}-sa_{12}a_{21}=ab\in J(R)$ and
$tr(A)=a_{11}+a_{22}=a+b\in U(R)$ by Theorem~\ref{teo1}. So one of
$a, b$ must be in $U(R)$ and the other must be in $J(R)$.
 Let $B=\left[%
\begin{array}{cc}
 a_{22} & a_{21}\\
 a_{12} & a_{11} \\
\end{array}%
\right]$. Then $tr(B)=tr(A)$ and $det_s(B)=det_s(A)$, and $A$ is
quasipolar if and only if $B$ is quasipolar by Lemma~\ref{lem7}.
Hence, without loss of generality we may assume that $a\in U(R)$,
$b\in J(R)$ and $a_{11}\in U(R)$. Let $P=\left[%
\begin{array}{cr}
 1 & 0\\
 a_{21}(a-a_{22})^{-1} & 1 \\
\end{array}%
\right]$. By Lemma~\ref{temel1}(2),  $P\in U\big(K_s(R)\big)$ and
easy calculation shows that $P^{-1}AP$ is an upper triangular
matrix in $K_s(R)$. Therefore $A$ is quasipolar from
Lemma~\ref{lem5} and Lemma~\ref{lem6}.
\end{proof}

\begin{thm}\label{teo3} Let $R$ be a commutative local ring with $s\in R$.
The following are equivalent.
\begin{enumerate}
\item $K_s(R)$ is quasipolar.
\item For every $A\in K_s(R)$ with $det_s(A)\in J(R)$, one of the following holds:

\begin{itemize}
    \item[(i)] $tr(A)\in J(R)$,
    \item[(ii)] The equation $x^2-tr(A)x+det_s(A)=0$ is solvable in $R$.
\end{itemize}
\end{enumerate}
\end{thm}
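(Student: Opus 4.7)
The plan is to reduce the statement directly to the trichotomy already established in Theorem~\ref{teo1} and Theorem~\ref{teo2}. The key observation is that, because $R$ is local and commutative, for any $A\in K_s(R)$ Lemma~\ref{temel1}(2) forces $det_s(A)$ into exactly one of $U(R)$ or $J(R)$; in the former case $A\in U\bigl(K_s(R)\bigr)$ is automatically quasipolar (take the idempotent $0$), so all the content of the theorem concerns matrices with $det_s(A)\in J(R)$.

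For $(1)\Rightarrow(2)$, I would pick $A\in K_s(R)$ with $det_s(A)\in J(R)$ and assume $tr(A)\notin J(R)$; the goal is to derive (ii). Since $R$ is local, $tr(A)\in U(R)$, so Theorem~\ref{teo1} rules out $A\in\bigl(K_s(R)\bigr)^{qnil}$. Moreover, $det_s(A)\in J(R)$ rules out $A\in U\bigl(K_s(R)\bigr)$ by Lemma~\ref{temel1}(2). As $K_s(R)$ is quasipolar by hypothesis, $A$ itself is quasipolar, and the equivalence in Theorem~\ref{teo2} then yields solvability of $x^2-tr(A)x+det_s(A)=0$ in $R$.

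For $(2)\Rightarrow(1)$, I take an arbitrary $A\in K_s(R)$ and split on whether $det_s(A)\in U(R)$ or $det_s(A)\in J(R)$. In the former case $A\in U\bigl(K_s(R)\bigr)$ is quasipolar with idempotent $0$. In the latter case I invoke hypothesis (2): if (i) holds then Theorem~\ref{teo1} puts $A\in\bigl(K_s(R)\bigr)^{qnil}$, which is quasipolar with idempotent $I_2$; if (i) fails but (ii) holds, then $tr(A)\notin J(R)$ and $det_s(A)\in J(R)$ again place $A$ outside $U\bigl(K_s(R)\bigr)$ and, by Theorem~\ref{teo1}, outside $\bigl(K_s(R)\bigr)^{qnil}$, so Theorem~\ref{teo2} applies and gives quasipolarity of $A$.

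There is essentially no obstacle here beyond bookkeeping: all the technical work has been done in Theorems~\ref{teo1} and~\ref{teo2}. The only point requiring a moment of care is making sure the hypothesis $A\notin U\bigl(K_s(R)\bigr)$ and $A\notin\bigl(K_s(R)\bigr)^{qnil}$ needed to invoke Theorem~\ref{teo2} is genuinely available in the $(2)\Rightarrow(1)$ direction, which is exactly what the case distinction on the localness of $R$ ensures.
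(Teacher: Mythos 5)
Your proposal is correct and follows essentially the same route as the paper's proof: both directions reduce to Theorem~\ref{teo1} and Theorem~\ref{teo2} via the same case analysis on $det_s(A)$ and $tr(A)$ lying in $U(R)$ or $J(R)$. The only (harmless) difference is stylistic: you argue $(1)\Rightarrow(2)$ contrapositively by assuming $tr(A)\notin J(R)$, whereas the paper first disposes of the case $E=I_2$; and you make explicit the easy facts that units and quasinilpotents are quasipolar with idempotents $0$ and $I_2$, which the paper leaves implicit.
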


\begin{proof}$(1)\Rightarrow (2)$ Suppose that $A\in K_s(R)$ with $det_s(A)\in
J(R)$. By $(1)$, there exists an idempotent $E\in K_s(R)$ such
that $E\in comm^2(A)$ and $A+E\in U\big(K_s(R)\big)$ and $EA\in
\big(K_s(R)\big)^{qnil}$. If $E=I_2$, then $A\in
\big(K_s(R)\big)^{qnil}$ and so $tr(A)\in J(R)$ by
Theorem~\ref{teo1}. So we can assume that $A\notin
\big(K_s(R)\big)^{qnil}$. Since $det_s(A)\in J(R)$, $A\notin
U\big(K_s(R)\big)$ by Lemma~\ref{temel1}(2). According to
Theorem~\ref{teo2}, the equation $x^2-tr(A)x+det_s(A)=0$ is
solvable in $R$.

$(2)\Rightarrow (1)$ Let $A\in K_s(R)$. If $det_s(A)\in U(R)$,
then $A\in U\big(K_s(R)\big)$ and so $A$ is quasipolar. Let
$det_s(A)\in J(R)$. If $tr(A)\in J(R)$, then $A$ is quasipolar by
Theorem~\ref{teo1}. Hence we assume that $tr(A)\in U(R)$. This
gives $A\notin U\big(K_s(R)\big)$ and $A\notin
\big(K_s(R)\big)^{qnil}$. By Theorem~\ref{teo2}, $A$ is quasipolar
and so $K_s(R)$ is quasipolar.
\end{proof}

Letting $s=1$ in Theorem~\ref{teo3} yields the main result of
\cite{CC2}.

\begin{cor}\label{cor4}Let $R$ be a commutative local ring
with $s\in U(R)$. The following are equivalent.
\begin{enumerate}
    \item $K_s(R)$ is quasipolar.
    \item $K_s(R)$ is strongly clean.
    \end{enumerate}
\end{cor}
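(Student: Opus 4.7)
The plan is to use the isomorphism $K_s(R)\cong M_2(R)$ (valid when $s\in U(R)$ by \cite[Corollary~4.10]{TLZ}) to reduce the corollary to showing that $M_2(R)$ is quasipolar if and only if $M_2(R)$ is strongly clean, and then prove the two implications separately.

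For (1)$\Rightarrow$(2), I would prove the general fact that every quasipolar ring is strongly clean. Given $A\in K_s(R)$ quasipolar via an idempotent $E\in comm^2(A)$ (so $A+E\in U(K_s(R))$ and $EA\in(K_s(R))^{qnil}$), I claim that $A=E+(A-E)$ is a strong clean decomposition. Since $E$ commutes with $A$, the Peirce decomposition writes $A+E=(1-E)A(1-E)+(EAE+E)$ with summands in the corners $(1-E)R(1-E)$ and $ERE$; unit-ness of $A+E$ in $K_s(R)$ forces both summands to be units in their respective corners, so in particular $(1-E)A(1-E)\in U((1-E)R(1-E))$. Moreover $EA=EAE$ is quasinilpotent in the corner $ERE$ (which has identity $E$), and feeding the commuting element $-E$ into the defining property yields $E-EAE\in U(ERE)$. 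Hence $A-E=(1-E)A(1-E)+(EAE-E)$ is a unit of $K_s(R)$ that commutes with $E$, which is exactly the strong clean decomposition.

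For (2)$\Rightarrow$(1), I would verify the condition of Theorem~\ref{teo3}. Let $A\in M_2(R)$ with $\det(A)\in J(R)$; if $tr(A)\in J(R)$ we are done, so assume $tr(A)\in U(R)$ and set $B=tr(A)^{-1}A$. Then $tr(B)=1$ and $\det(B)\in J(R)$, so $\det(I_2-B)=1-tr(B)+\det(B)=\det(B)\in J(R)$; hence neither $B$ nor $I_2-B$ is a unit and any strong clean decomposition $B=F+V$ must use a non-trivial idempotent $F$. By Lemma~\ref{lem2}(1) applied with $s=1\in U(R)$, after conjugation we may assume $F=\mathrm{diag}(1,0)$; commutativity of $F$ and $V$ then forces $V$ (and hence $B$) to be diagonal, so $B$ is similar to $\mathrm{diag}(\gamma,1-\gamma)$ for some $\gamma\in R$ (using $tr(B)=1$). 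The relation $\gamma(1-\gamma)=\det(B)\in J(R)$ together with primality of $J(R)$ (since $R$ is local) puts one of $\gamma,1-\gamma$ into $J(R)$, giving a root in $R$ of $x^2-x+\det(B)=0$; scaling by $tr(A)$ produces a root of $x^2-tr(A)x+\det(A)=0$, as required by Theorem~\ref{teo3}.

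The main obstacle is direction (2)$\Rightarrow$(1): strong cleanness of an individual element does not by itself produce a root of its characteristic polynomial, since the trivial decompositions with $F=0$ or $F=I_2$ carry no spectral information. The key trick is the normalization $B=tr(A)^{-1}A$ with $tr(B)=1$, which simultaneously blocks both trivial decompositions and forces the non-trivial Peirce structure from which the desired root can be extracted.
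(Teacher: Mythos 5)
Your proposal is correct, and both directions check out: the Peirce-corner argument in (1)$\Rightarrow$(2) is the standard proof that quasipolar elements are strongly clean (note that $E-EAE\in U(EK_s(R)E)$ does follow from feeding $-E\in comm(EAE)$ into the quasinilpotence of $EA=EAE$ and then cutting the resulting unit $I_2-EAE$ down to the corner), and in (2)$\Rightarrow$(1) the element $\gamma$ really is a root of $x^2-x+det(B)=0$ since $det(B)=\gamma(1-\gamma)$, so rescaling by $tr(A)$ solves the characteristic equation and Theorem~\ref{teo3} applies. The overall skeleton matches the paper: both proofs funnel everything through the root criterion of Theorem~\ref{teo3} after the normalization by $tr(A)^{-1}$. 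The difference is in how the root is produced. The paper simply quotes \cite[Corollary 16]{TZ}, which says that strong cleanness of $K_s(R)$ (for $s\in U(R)$) forces $t^2-t-w=0$ to be solvable for every $w\in J(R)$, and it likewise dismisses (1)$\Rightarrow$(2) as obvious. You instead re-derive the needed solvability from scratch: you normalize to $tr(B)=1$ so that both trivial idempotents are excluded from any strong clean decomposition of $B$, invoke Lemma~\ref{lem2}(1) to diagonalize the non-trivial idempotent, and read the root off the resulting diagonal form. This makes your write-up self-contained (essentially reproving the relevant direction of the Tang--Zhou result inside the argument) at the cost of being longer; the paper's version is shorter but leans on two external facts. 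One cosmetic remark: you pass through $K_s(R)\cong M_2(R)$, which is harmless but unnecessary, since Lemma~\ref{lem2}(1) and Theorem~\ref{teo3} are already stated for $K_s(R)$ with $s\in U(R)$, so the same argument runs verbatim with $det_s$ in place of $\det$.
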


\begin{proof} $(1)\Rightarrow(2)$ is obvious.
 $(2)\Rightarrow(1)$ Let $A\in K_s(R)$. According to
Theorem~\ref{teo3}, we may assume that $det_s(A)\in J(R)$ and
$tr(A)\in U(R)$. By \cite[Corollary 16]{TZ}, the equation
$t^2-t-w=0$ is solvable in $R$ for all $w\in J(R)$ . Then the
equation $t^2-t-\big(tr(A)\big)^{-2}det_s(A)=0$ is solvable in
$R$. Hence the equation $t^2-tr(A)t+det_s(A)=0$ is solvable in
$R$. In view of Theorem~\ref{teo3}, $K_s(R)$ is quasipolar.
\end{proof}

By using Corollary \ref{cor4}, from \cite[Theorem 2.4]{CYZ}, for
the ring $R=\widehat{\Bbb Z}_p$ and $s\in U(R)$, we have  $K_s(R)$
is quasipolar if and only if $K_s(R)$ is strongly clean.

\begin{cor}\label{tugce} Let $R$ be a commutative local ring. Then the
following are equivalent.
\begin{enumerate}
    \item $K_1(R)$ is quasipolar.
    \item $K_1(R)$ is strongly clean.
    \item $K_s(R)$ is strongly clean for all $s\in R$.
    \item $K_s(R)$ is strongly clean for all $s\in J(R)$.
\end{enumerate}
\end{cor}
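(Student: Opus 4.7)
The strategy is to show that all four conditions are equivalent to the single ring-theoretic criterion $(\star)$: the equation $t^{2}-t-w=0$ has a root in $R$ for every $w\in J(R)$, which by \cite[Corollary 16]{TZ} characterises the strong cleanness of $K_1(R)=M_2(R)$. The equivalence $(1)\Leftrightarrow(2)$ is Corollary \ref{cor4} applied with $s=1\in U(R)$. The implications $(3)\Rightarrow(2)$ (specialise to $s=1$) and $(3)\Rightarrow(4)$ (since $J(R)\subseteq R$) are trivial, so only $(2)\Rightarrow(3)$ and $(4)\Rightarrow(2)$ require real work.

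For $(2)\Rightarrow(3)$, I would extract $(\star)$ from $(2)$ via \cite[Corollary 16]{TZ}. If $s\in U(R)$, then $K_s(R)\cong K_1(R)$ is strongly clean. If $s\in J(R)$, I verify the criterion of Theorem \ref{teo3}: for $A\in K_s(R)$ with $det_s(A)\in J(R)$ and $tr(A)\in U(R)$, the substitution $x=tr(A)y$ (as in the proof of Corollary \ref{cor4}) converts $x^{2}-tr(A)x+det_s(A)=0$ into $y^{2}-y-w=0$ with $w=-det_s(A)/tr(A)^{2}\in J(R)$, solvable by $(\star)$; hence $K_s(R)$ is quasipolar. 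Quasipolarity then upgrades to strong cleanness element-by-element along the Lemma \ref{lem4} trichotomy: units are trivially strongly clean; a quasinilpotent $A$ is handled by $A=(A-I_2)+I_2$, with $A-I_2\in U(K_s(R))$ since Theorem \ref{teo1} gives $det_s(A-I_2)=det_s(A)-tr(A)+1\in U(R)$; and an $A$ similar to a diagonal matrix inherits a strongly clean decomposition from the strong cleanness of the commutative local ring $R$.

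For $(4)\Rightarrow(2)$, I extract $(\star)$ directly. Given $w\in J(R)$, choose $s=w\in J(R)$ and
$$A=\left[\begin{array}{cc}1 & 1\\ 1 & 0\end{array}\right]\in K_{s}(R),$$
so that $tr(A)=1\in U(R)$, $det_{s}(A)=-w\in J(R)$, whence $A\notin U\big(K_s(R)\big)$ and, by Theorem \ref{teo1}, $A\notin \big(K_s(R)\big)^{qnil}$. Assumption $(4)$ yields a strongly clean decomposition $A=U+E$ in $K_w(R)$; since $A$ is neither a unit nor quasinilpotent, $E$ is non-trivial, and Lemma \ref{lem2} provides $P\in U\big(K_w(R)\big)$ diagonalising $E$. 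The commuting relation $(PUP^{-1})(PEP^{-1})=(PEP^{-1})(PUP^{-1})$ then forces $PUP^{-1}$ to be diagonal as well, so $A$ is similar to a diagonal matrix. Lemma \ref{lem4} now identifies $A$ as quasipolar, and Theorem \ref{teo2} produces a root of $x^{2}-x-w=0$ in $R$, establishing $(\star)$ and therefore $(2)$ via \cite[Corollary 16]{TZ}.

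The principal obstacle throughout is the bridge between strong cleanness and quasipolarity of $K_s(R)$ for $s\in J(R)$, a case not covered directly by Corollary \ref{cor4}. The diagonalisation argument just sketched, leaning on the non-triviality of the clean idempotent and on Lemmas \ref{lem2}--\ref{lem5}, is the key technical ingredient and must be carried out carefully to close the cycle.
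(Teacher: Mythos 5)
Your argument is correct, but it takes a genuinely different route from the paper, whose entire proof is one line: combine Corollary~\ref{cor4} (for the equivalence $(1)\Leftrightarrow(2)$, since $1\in U(R)$) with \cite[Corollary 19]{TZ}, which already asserts the equivalence of $(2)$, $(3)$ and $(4)$ for a commutative local ring. You instead re-derive that external equivalence from the machinery internal to this paper, routing everything through the criterion that $t^{2}-t-w=0$ be solvable for all $w\in J(R)$. Your $(4)\Rightarrow(2)$ step is the genuinely new content: the test matrix $\left[\begin{smallmatrix}1&1\\1&0\end{smallmatrix}\right]\in K_{w}(R)$ with $det_{w}=-w$, together with the diagonalisation of the clean idempotent via Lemma~\ref{lem2} and the passage through Lemma~\ref{lem4} and Theorem~\ref{teo2}, correctly extracts a root of $t^{2}-t-w=0$; and your $(2)\Rightarrow(3)$ step correctly combines Theorem~\ref{teo3} with the elementwise upgrade from quasipolar to strongly clean (your three cases all check out, though for quasinilpotent $A$ you could get $A-I_{2}\in U\bigl(K_{s}(R)\bigr)$ directly from $-I_{2}\in comm(A)$ and the definition of $R^{qnil}$, rather than via the determinant formula, which is stated after Lemma~\ref{temel1} rather than in Theorem~\ref{teo1}). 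What your approach buys is self-containedness: you only import \cite[Corollary 16]{TZ} (which Corollary~\ref{cor4} already uses) instead of the stronger \cite[Corollary 19]{TZ}, at the cost of several paragraphs of verification that the paper delegates to the reference.
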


\begin{proof} Combine Corollary~\ref{cor4} with  \cite[Corollary 19]{TZ}. \end{proof}

Corollary~\ref{tugce} shows that $M_2(R)$ is quasipolar if and
only if $M_2(R)$ is strongly clean for a commutative local ring
$R$.

\begin{thm}\label{teo6}Let $R$ be a commutative local ring with $s\in J(R)$. Then the following are equivalent.
\begin{enumerate}
    \item $K_s(R)$ is a quasipolar ring.
    \item For any $u,v\in U(R)$ and $w\in J(R)$, the equation
    $t^2-(u+w)t+(uw-sv)=0$ is solvable in $R$.
    \item For any $u\in U(R)$ and $w\in J(R)$, the equation
    $t^2-(1+w)t+(w-su)=0$ is solvable in $R$.
    \item $\left[%
\begin{array}{cc}
 1 & 1\\
 u & w \\
\end{array}%
\right]$ is quasipolar in $K_s(R)$ for all $u\in U(R)$ and $w\in
J(R)$.
\end{enumerate}
\end{thm}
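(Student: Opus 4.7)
My plan is to prove the equivalences via the cycle $(1) \Rightarrow (2) \Rightarrow (3) \Rightarrow (4) \Rightarrow (1)$. The first three implications are quick applications of the results already established; the last is the substantive step.

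For $(1) \Rightarrow (2)$, given $u, v \in U(R)$ and $w \in J(R)$, I would test the hypothesis on the matrix $A = \left[\begin{array}{cc} u & 1 \\ v & w \end{array}\right] \in K_s(R)$. Because $s, w \in J(R)$, one checks that $tr(A) = u + w \in U(R)$ and $det_s(A) = uw - sv \in J(R)$, so $A$ is neither invertible (Lemma~\ref{temel1}(2)) nor quasinilpotent (Theorem~\ref{teo1}). Theorem~\ref{teo2} then converts the quasipolarity of $A$ into solvability of $x^2 - (u+w)x + (uw - sv) = 0$, as required. For $(2) \Rightarrow (3)$, I would simply set $u = 1$ in $(2)$ and relabel the parameter $v$ as $u$. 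For $(3) \Rightarrow (4)$, I would apply Theorem~\ref{teo2} to $B = \left[\begin{array}{cc} 1 & 1 \\ u & w \end{array}\right]$, whose trace $1+w$ is a unit and whose $s$-determinant $w - su$ lies in $J(R)$ (using $s \in J(R)$); the equation appearing in Theorem~\ref{teo2} is precisely the one of $(3)$.

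The main work is $(4) \Rightarrow (1)$. By Theorem~\ref{teo3}, I only need to handle $A \in K_s(R)$ with $det_s(A) \in J(R)$ and $tr(A) \in U(R)$; such an $A$ is neither invertible nor quasinilpotent, and Lemma~\ref{lem8} places it up to similarity in the shape $\left[\begin{array}{cc} u & 1 \\ v & w \end{array}\right]$ (or its analog $\left[\begin{array}{cc} w & 1 \\ v & u \end{array}\right]$, which has the same trace and $s$-determinant) with $u, v \in U(R)$ and $w \in J(R)$. Lemma~\ref{temel1}(3) then identifies the quadratic for $A$ with $(t - u)(t - w) = sv$. The main obstacle will be to extract a root of this equation from hypothesis $(4)$, which directly addresses only equations of the shape $(t - 1)(t - w') = su'$. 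My plan is the substitution $t = u\tau$: it rewrites the equation as $u^2(\tau - 1)(\tau - u^{-1}w) = sv$, i.e., $(\tau - 1)(\tau - u^{-1}w) = s \cdot u^{-2}v$. Since $u^{-1}w \in J(R)$ and $u^{-2}v \in U(R)$, this is exactly of the shape solvable by $(3)$ — which follows from $(4)$ by Theorem~\ref{teo2} applied to $\left[\begin{array}{cc} 1 & 1 \\ u^{-2}v & u^{-1}w \end{array}\right]$. A solution $\tau$ then lifts to $t = u\tau$, Theorem~\ref{teo2} delivers quasipolarity of $\left[\begin{array}{cc} u & 1 \\ v & w \end{array}\right]$, and Lemma~\ref{lem5} transfers it back to $A$.
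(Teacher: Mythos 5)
Your proposal is correct and uses essentially the same ingredients as the paper's proof: Theorem~\ref{teo2} to translate quasipolarity into solvability of the characteristic quadratic, Lemma~\ref{lem8} for the similarity reduction in the implication toward (1), and the normalization $t=u\tau$ (which is exactly the paper's substitution in its $(2)\Leftrightarrow(3)$ step). You merely arrange the implications as a cycle $(1)\Rightarrow(2)\Rightarrow(3)\Rightarrow(4)\Rightarrow(1)$ rather than the paper's configuration, which changes nothing of substance.
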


\begin{proof} $(1)\Rightarrow (4)$ is obvious.

$(2)\Leftrightarrow(3)$ The equation
    $t^2-(u+w)t+(uw-sv)=0$ is solvable in $R$ if and only if the equation
    $(tu^{-1})^2-(1+wu^{-1})tu^{-1}+(wu^{-1}-svu^{-2})=0$ is solvable in $R$.

$(3)\Leftrightarrow(4)$  Follows from Theorem~\ref{teo2}.

$(2)\Rightarrow (1)$ Let $A\in K_s(R)$. According to
Theorem~\ref{teo3}, we may assume that $det_s(A)\in J(R)$ and
$tr(A)\in U(R)$, and then by Lemma~\ref{lem8}, similarly we can
assume that
$A\sim \left[%
\begin{array}{cc}
 u & 1\\
 v & w \\
\end{array}%
\right]$ where $u,v\in U(R)$ and $w\in J(R)$. Hence, by (2) and
Theorem~\ref{teo2}, $A$ is quasipolar and so holds (1).
\end{proof}

\begin{cor}\label{cor7} If $R=\widehat{\Bbb Z}_p$, then $K_s(R)$ is quasipolar for all $s\in R$.
\end{cor}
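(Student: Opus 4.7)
The plan is to verify the hypotheses of two earlier results, one for each of the two possible cases for $s$ in the local ring $R=\widehat{\mathbb{Z}}_p$. Since $R$ is local, $s\in U(R)$ or $s\in J(R)=p\widehat{\mathbb{Z}}_p$. When $s\in U(R)$, the remark immediately following Corollary~\ref{cor4} already records that $K_s(R)$ is quasipolar (because $M_2(\widehat{\mathbb{Z}}_p)$ is strongly clean by \cite[Theorem 2.4]{CYZ}, and strong cleanness transfers to $K_s(R)\cong M_2(R)$, then to quasipolarity via Corollary~\ref{cor4}). So only the case $s\in J(R)$ remains, and for that I would use Theorem~\ref{teo6}, specifically the equivalence $(1)\Leftrightarrow(3)$.

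Thus the task reduces to showing: for every $u\in U(R)$ and every $w\in J(R)$, the quadratic
\[
t^2-(1+w)t+(w-su)=0
\]
has a root in $\widehat{\mathbb{Z}}_p$. I would complete the square: the discriminant is $(1+w)^2-4(w-su)=(1-w)^2+4su$, so solvability in $R$ is equivalent to showing that $(1-w)^2+4su$ is a square in $R$. Since $1-w\in U(R)$, one may pull out the unit square $(1-w)^2$ and reduce to asking whether $1+\alpha$ is a square in $\widehat{\mathbb{Z}}_p$, where $\alpha=4su(1-w)^{-2}$. Note that $\alpha\in 4J(R)=4p\widehat{\mathbb{Z}}_p$ because $s\in J(R)$.

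For $p$ odd, I would invoke Hensel's lemma applied to $f(t)=t^2-(1+\alpha)$ at the approximate root $t_0=1$: here $f(1)=-\alpha$ has $p$-adic valuation at least $1$, while $f'(1)=2$ is a unit since $p\neq 2$, so the Hensel condition $v_p(f(1))>2v_p(f'(1))=0$ holds and lifts to a genuine root. For $p=2$, however, $f'(1)=2$ is no longer a unit, so Hensel in its basic form fails; this is the step I expect to require the most care. The saving point is that when $p=2$ we have $s\in 2\widehat{\mathbb{Z}}_2$, hence $4s\in 8\widehat{\mathbb{Z}}_2$, so $\alpha\in 8\widehat{\mathbb{Z}}_2$. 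I would then appeal to the classical description of squares in $\widehat{\mathbb{Z}}_2^\times$, namely that every element of $1+8\widehat{\mathbb{Z}}_2$ is a square (equivalently, refined Hensel applied to $f(t)=t^2-(1+\alpha)$ starting from $t_0=1$ with $v_2(f(1))\geq 3>2=2v_2(f'(1))$). In either case $1+\alpha$ is a square, the discriminant is a square, and the quadratic has a root in $R$. Theorem~\ref{teo6} then delivers quasipolarity of $K_s(R)$, completing the proof.
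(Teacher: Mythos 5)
Your proof is correct, but the key step is handled quite differently from the paper. Both arguments split into $s\in U(R)$ versus $s\in J(R)$ and both funnel the second case through Theorem~\ref{teo6} (the paper via condition (4) together with Theorem~\ref{teo2}, you via condition (3)); the difference is how the quadratic $t^2-(1+w)t+(w-su)=0$ gets solved. The paper imports the strong cleanness of $M_2(\widehat{\Bbb Z}_p)$ from \cite[Theorem 2.4]{CYZ} together with \cite[Corollary 16]{TZ} to obtain solvability of $t^2-t-w'=0$ for $w'\in J(R)$, and then rescales by $tr(A)$ exactly as in the proof of Corollary~\ref{cor4}; you instead solve the quadratic directly by completing the square and applying Hensel's lemma (with the $1+8\widehat{\Bbb Z}_2$ refinement for $p=2$). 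Your route is self-contained $p$-adic analysis and makes transparent why $\widehat{\Bbb Z}_p$ succeeds where, say, $\Bbb Z_{(p)}$ fails, while the paper's route reuses the existing strong-cleanness literature and needs no case split on $p$. One small point to patch: for $p=2$ the inference from ``the discriminant $D=(1-w)^2+4su$ is a square'' to ``the quadratic has a root in $R$'' is not automatic, because the quadratic formula divides by $2\notin U(\widehat{\Bbb Z}_2)$. It does go through here: either note that $\widehat{\Bbb Z}_2$ is integrally closed in $\Bbb Q_2$, so a root in $\Bbb Q_2$ of a monic polynomial over $\widehat{\Bbb Z}_2$ already lies in $\widehat{\Bbb Z}_2$, or observe that the square root $d$ of $D$ produced by your refined Hensel step satisfies $d\equiv 1-w\pmod{4}$, whence $(1+w)\pm d$ is even and $\tfrac{1}{2}\big((1+w)\pm d\big)\in\widehat{\Bbb Z}_2$. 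With that sentence added, the argument is complete.
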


\begin{proof} We know that if $s\in
U(R)$, then $K_s(R)$ is quasipolar. Let $A\in K_s(R)$ and $s\in
J(R)$. In view of Theorem~\ref{teo6}, we can assume
that $A=\left[%
\begin{array}{cc}
 1 & 1\\
 u & w \\
\end{array}%
\right]$ where $u\in U(R)$ and $w\in J(R)$. Since $K_1(R)$ is
strongly clean, the equation $t^2-t-w=0$ is solvable in $R$ for
all $w\in J(R)$ by \cite[Corollary 16]{TZ}. Then the equation
$t^2-t+det_s(A)\big(tr(A)\big)^{-2}=0$ is solvable in $R$. This
gives that the equation $t^2-tr(A)t+det_s(A)=0$ is solvable in
$R$. Hence, by Theorem~\ref{teo2}, $A$ is quasipolar in $K_s(R)$.
Therefore $K_s(R)$ is quasipolar by Theorem~\ref{teo6}.
\end{proof}

\begin{lem}\label{root}  Let $R$ be a commutative local ring with
$s=\sum\limits_{i=0}^{\infty}s_ix^i\in R[[x]]$ and let $A(x)\in
K_s\big(R[[x]]\big)$. The following are equivalent.
\begin{itemize}
\item[(1)] The equation $t^2-tr\big(A(0)\big)t+det_{s_0}\big(A(0)\big)=0$ has a root in $J(R)$ and a root in $U(R)$.
\item[(2)] The equation $t^2-tr\big(A(x)\big)t+det_{s}\big(A(x)\big)=0$ has a root in $J\big(R[[x]]\big)$
and a root in $U\big(R[[x]]\big)$.
\end{itemize}
\end{lem}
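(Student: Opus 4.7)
The plan is to prove $(2)\Rightarrow(1)$ by evaluating at $x=0$, and to prove $(1)\Rightarrow(2)$ by a Hensel-type lifting argument that builds the desired roots coefficient-by-coefficient. Throughout I will use that $R[[x]]$ is a commutative local ring with Jacobson radical $J(R)+xR[[x]]$, so a power series is a unit (respectively, lies in the radical) if and only if its constant term is a unit (respectively, lies in $J(R)$). Moreover, if $A(x)=[a_{ij}(x)]$, then $tr(A(x))$ and $det_s(A(x))$ are power series whose constant terms are $tr(A(0))$ and $det_{s_0}(A(0))$, respectively.

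For $(2)\Rightarrow(1)$, I would start with roots $\alpha(x)\in J(R[[x]])$ and $\beta(x)\in U(R[[x]])$ of $t^2-tr(A(x))t+det_s(A(x))=0$ and substitute $x=0$ into the identities $\alpha(x)^2-tr(A(x))\alpha(x)+det_s(A(x))=0$ and the analogous one for $\beta(x)$. This immediately produces $\alpha(0)\in J(R)$ and $\beta(0)\in U(R)$ as roots of $t^2-tr(A(0))t+det_{s_0}(A(0))=0$.

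For $(1)\Rightarrow(2)$, let $\alpha_0\in J(R)$ and $\beta_0\in U(R)$ be the given roots. By Vieta's formulas, $\alpha_0+\beta_0=tr(A(0))$ and $\alpha_0\beta_0=det_{s_0}(A(0))$, and since $\alpha_0\in J(R)$ while $\beta_0\in U(R)$ we have $\alpha_0-\beta_0\in U(R)$. Writing $tr(A(x))=\sum_{i\geq 0} t_i x^i$ and $det_s(A(x))=\sum_{i\geq 0} d_i x^i$, I will construct $\alpha(x)=\sum\alpha_i x^i$ and $\beta(x)=\sum\beta_i x^i$ by induction on $i$ so that, at each stage, $\alpha_i+\beta_i=t_i$ and $\sum_{j+k=i}\alpha_j\beta_k=d_i$. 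For $i\geq 1$, eliminating $\alpha_i=t_i-\beta_i$ reduces the Cauchy-product condition to the linear equation $(\alpha_0-\beta_0)\beta_i=c_i$, where $c_i\in R$ is an explicit expression in previously determined coefficients. Since $\alpha_0-\beta_0\in U(R)$, this determines $\beta_i$ uniquely, and then $\alpha_i:=t_i-\beta_i$. The resulting $\alpha(x)$ lies in $J(R[[x]])$ because $\alpha_0\in J(R)$, while $\beta(x)$ lies in $U(R[[x]])$ because $\beta_0\in U(R)$, and by construction $\alpha(x)+\beta(x)=tr(A(x))$ and $\alpha(x)\beta(x)=det_s(A(x))$, so they are roots of the required quadratic.

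The main (and essentially only) technical point is the inductive step of $(1)\Rightarrow(2)$: one must observe that the two conditions on $(\alpha_i,\beta_i)$ collapse to a single linear equation whose coefficient $\alpha_0-\beta_0$ is a unit in $R$, allowing the lift to proceed without obstruction. There are no convergence issues since one is only constructing a formal power series coefficient by coefficient, and the argument mirrors the classical Hensel lemma for a simple factorization of the polynomial $t^2-tr(A(x))t+det_s(A(x))$ over the complete local ring $R[[x]]$.
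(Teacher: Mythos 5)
Your proof is correct and follows essentially the same route as the paper's: $(2)\Rightarrow(1)$ by evaluating at $x=0$, and $(1)\Rightarrow(2)$ by a coefficient-by-coefficient Hensel-type lift whose solvability at each stage rests on $\alpha_0-\beta_0\in U(R)$. The only cosmetic difference is that the paper substitutes a single unknown series into the quadratic and solves for each root separately (the linear coefficient there being $2b_0-\mu_0=\pm(\alpha_0-\beta_0)$), whereas you lift the pair via the Vieta relations $\alpha(x)+\beta(x)=tr\big(A(x)\big)$ and $\alpha(x)\beta(x)=det_s\big(A(x)\big)$ --- relations which, at stage $0$, do follow from the two given roots precisely because $\alpha_0-\beta_0$ is a unit, a small point worth stating explicitly.
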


\begin{proof} $(1)\Rightarrow (2)$ Note that $J\big(R[[x]]\big)=J(R)+xR[[x]]$. Assume that the equation
$t^2-tr\big(A(0)\big)t+det_{s_0}\big(A(0)\big)=0$ has a root
$\alpha\in J(R)$ and a root $\beta\in U(R)$. Let
$y=\sum\limits_{i=0}^{\infty}b_ix^i$,
$tr\big(A(x)\big)=\sum\limits_{i=0}^{\infty}\mu_ix^i$ and
$det_s\big(A(x)\big)=\sum\limits_{i=0}^{\infty}\lambda_ix^i\in
R[[x]]$ where $\mu_0=tr\big(A(0)\big)$ and
$\lambda_0=det_{s_0}\big(A(0)\big)$. Then,
$y^2-tr\big(A(x)\big)y-det_s\big(A(x)\big)=0$ holds in $R[[x]]$ if
the following equations are satisfied:

\[
\begin{aligned}
b_0^2-b_0\mu_0+\lambda_0=& 0\\
b_1[b_0+b_0-\mu_0]-b_0\mu_1+\lambda_1=&0\\
b_2[b_0+b_0-\mu_0]+b_1^2-b_0\mu_2-b_1\mu_1+\lambda_2=&0\\
\vdots
 \end{aligned}
\text{\quad ~~~~~\quad}
\begin{gathered}
(i_0)\\
(i_1)\\
(i_2)\\
\vdots
\end{gathered}\]

\noindent Obviously, $\mu_0=tr\big(A(0)\big)=\alpha+\beta\in
U(R)$. Let $b_0=\alpha$. Since $b_0+b_0-\mu_0=\alpha-\beta\in
U(R)$,  we obtain $b_1$, $b_2$, $b_3$, $\ldots$ from equations
above.  Then $t^2-tr\big(A(x)\big)t+det_{s}\big(A(x)\big)=0$ has a
root $\alpha(x)\in J\big(R[[x]]\big)$. If $b_0=\beta$,
analogously, we show that
$t^2-tr\big(A(x)\big)t+det_{s}\big(A(x)\big)=0$ has a root
$\beta(x)\in U\big(R[[x]]\big)$.

$(2)\Rightarrow (1)$ Suppose that the equation
$t^2-tr\big(A(x)\big)t+det_{s}\big(A(x)\big)$ has a root
$\alpha(x)\in J\big(R[[x]]\big)$ and a root $\beta(x)\in
U\big(R[[x]]\big)$. This implies that the equation
$t^2-tr\big(A(0)\big)t+det_{s_0}\big(A(0)\big)=0$ has a root
$\alpha(0)\in J(R)$ and a root $\beta(0)\in U(R)$.
\end{proof}

\begin{thm}\label{power}Let $R$ be a commutative local ring with
$s=\sum\limits_{i=0}^{\infty}s_ix^i\in R[[x]]$. The following are
equivalent.
\begin{enumerate}
\item $K_s\big(R[[x]]\big)$ is quasipolar.
\item $K_{s_{0}}(R)$ is quasipolar.
\end{enumerate}
\end{thm}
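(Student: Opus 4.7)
The plan is to apply Theorem~\ref{teo3} on both sides and transfer the solvability of the associated quadratic via Lemma~\ref{root}. Note first that $R[[x]]$ is a commutative local ring with $J\big(R[[x]]\big)=J(R)+xR[[x]]$, so the evaluation $\varepsilon\colon R[[x]]\to R$ at $x=0$ maps $J\big(R[[x]]\big)$ onto $J(R)$ and $U\big(R[[x]]\big)$ onto $U(R)$, and $R\cap J\big(R[[x]]\big)=J(R)$. A useful preliminary: over any commutative local ring $S$, an equation $t^2-bt+c=0$ with $c\in J(S)$ and $b\in U(S)$ is solvable if and only if it has a root in $J(S)$ and a root in $U(S)$; indeed, by Vieta any root $a$ has partner $b-a$, and the relations $a+(b-a)=b\in U(S)$, $a(b-a)=c\in J(S)$ force exactly one root into each set. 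This is precisely the form of Lemma~\ref{root}'s hypothesis.

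For $(1)\Rightarrow(2)$, I would start with $A=[a_{ij}]\in K_{s_0}(R)$ satisfying $det_{s_0}(A)\in J(R)$ and view $A$ as a constant element of $K_s\big(R[[x]]\big)$. Since $s-s_0\in xR[[x]]$, a direct computation gives $det_s(A)=det_{s_0}(A)-(s-s_0)a_{12}a_{21}\in J\big(R[[x]]\big)$. Applying Theorem~\ref{teo3} to $A$ inside $K_s\big(R[[x]]\big)$ yields either $tr(A)\in J\big(R[[x]]\big)$, whence $tr(A)\in R\cap J\big(R[[x]]\big)=J(R)$, giving (2)(i); or the quadratic $t^2-tr(A)t+det_s(A)=0$ is solvable in $R[[x]]$, in which case the remaining case to handle is $tr(A)\in U(R)$. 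The preliminary then produces a root in $J\big(R[[x]]\big)$ and a root in $U\big(R[[x]]\big)$, and Lemma~\ref{root} applied to the constant matrix $A(x)=A$ yields the desired root in $R$ of $t^2-tr(A)t+det_{s_0}(A)=0$.

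For $(2)\Rightarrow(1)$, I would take $A(x)\in K_s\big(R[[x]]\big)$ with $det_s\big(A(x)\big)\in J\big(R[[x]]\big)$ and verify Theorem~\ref{teo3}(2) for $A(x)$. Evaluating at $x=0$ gives $det_{s_0}\big(A(0)\big)\in J(R)$. If $tr\big(A(x)\big)\in J\big(R[[x]]\big)$ the first alternative holds; otherwise $tr\big(A(x)\big)\in U\big(R[[x]]\big)$, so $tr\big(A(0)\big)\in U(R)$. By (2) and Theorem~\ref{teo3} applied in $K_{s_0}(R)$, the equation $t^2-tr\big(A(0)\big)t+det_{s_0}\big(A(0)\big)=0$ is solvable in $R$; the preliminary then supplies a root in $J(R)$ and a root in $U(R)$, and Lemma~\ref{root} lifts them to roots in $J\big(R[[x]]\big)$ and $U\big(R[[x]]\big)$ of $t^2-tr\big(A(x)\big)t+det_s\big(A(x)\big)=0$, completing the verification.

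The principal obstacle is matching the ``one root'' formulation of Theorem~\ref{teo3} with the ``root in $J$ and root in $U$'' formulation of Lemma~\ref{root}; once the Vieta observation bridges them, the proof becomes careful bookkeeping about how $det_s$ and $tr$ behave under the inclusion $R\hookrightarrow R[[x]]$ and under evaluation at $x=0$, together with the identity $det_s(A)=det_{s_0}(A)-(s-s_0)a_{12}a_{21}$ that accommodates the change of multiplier from $s_0$ to $s$.
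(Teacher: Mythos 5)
Your proposal is correct and follows essentially the same route as the paper: apply Theorem~\ref{teo3} in both $K_s\big(R[[x]]\big)$ and $K_{s_0}(R)$ and transfer solvability of the quadratic $t^2-tr(A)t+det_s(A)=0$ across the evaluation at $x=0$ via Lemma~\ref{root}. Your explicit Vieta observation (that for $b\in U(S)$, $c\in J(S)$ solvability forces one root in $J(S)$ and one in $U(S)$) and the identity $det_s(A)=det_{s_0}(A)-(s-s_0)a_{12}a_{21}$ are exactly the bookkeeping the paper leaves implicit, so they strengthen rather than alter the argument.
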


\begin{proof} $(1)\Rightarrow(2)$ Let $A\in K_{s_0}(R)$ with $det_{s_0}(A)\in J(R)$.
This gives $A\in K_s\big(R[[x]]\big)$ and $det_s(A)\in
J\big(R[[x]]\big)$. By $(1)$ and Theorem~\ref{teo3}, either
$tr(A)\in J\big(R[[x]]\big)$ or the equation
$t^2-tr(A)t+det_s(A)=0$ is solvable in $R[[x]]$. If $tr(A)\in
J\big(R[[x]]\big)$, then we have $tr(A)\in J(R)$ and so $A\in
K_{s_0}(R)$ is quasipolar by Theorem~\ref{teo3}. If the equation
$t^2-tr(A)t+det_s(A)=0$ is solvable in $R[[x]]$, then the equation
$t^2-tr(A)t+det_{s_0}(A)=0$ is solvable in $R$ by
Lemma~\ref{root}. So $A\in K_{s_0}(R)$ is quasipolar by
Theorem~\ref{teo3}.

$(2)\Rightarrow(1)$  Let $A(x)\in K_s\big(R[[x]]\big)$ with
$det_s\big(A(x)\big)\in J\big(R[[x]]\big)$. This gives
$det_{s_0}\big(A(0)\big)\in J(R)$. If $tr\big(A(x)\big)\in
J\big(R[[x]]\big)$, then $A(x)$ is quasipolar by
Theorem~\ref{teo2}. Hence we can assume that $tr\big(A(x)\big)\in
U\big(R[[x]]\big)$ and so $tr\big(A(0)\big)\in U(R)$. By $(2)$ and
Lemma~\ref{root}, the equation
$t^2-tr\big(A(0)\big)t+det_{s_0}\big(A(0)\big)=0$ is solvable in
$R$. Let $\lambda,\mu\in R$ be roots of this equation. Since
$tr\big(A(0)\big)\in U(R)$ and $det_{s_0}\big(A(0)\big)\in J(R)$,
one of $\lambda, \mu$ must be in $U(R)$ and the other must be in
$J(R)$. Without loss of generality we assume that $\lambda\in
J(R)$ and $\mu\in U(R)$. Thus the equation
$t^2-tr\big(A(x)\big)t+det_{s}\big(A(x)\big)=0$ is solvable in
$R[[x]]$ by Lemma~\ref{root}. According to Theorem~\ref{teo3},
$A(x)\in K_s\big(R[[x]]\big)$ is quasipolar and so holds $(1)$.
\end{proof}

\begin{cor}\label{cor2}Let $R$ be a commutative local ring with $s\in R$.
Then $K_s\big(R[[x]]\big)$ is quasipolar if and only if $K_s(R)$
is quasipolar.
\end{cor}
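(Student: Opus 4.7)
The plan is simply to reduce this to Theorem~\ref{power}, which already handles the much more general case of an arbitrary power series multiplier $s=\sum_{i=0}^\infty s_i x^i\in R[[x]]$. In the setting of Corollary~\ref{cor2} we are given $s\in R$, so when we view $s$ as an element of $R[[x]]$ via the canonical embedding $R\hookrightarrow R[[x]]$, it is a constant power series: $s=s_0+\sum_{i\geq 1}0\cdot x^i$ with $s_0=s$ and all higher coefficients zero.

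First I would note that under this identification, the generalized matrix ring $K_s\bigl(R[[x]]\bigr)$ built with the constant multiplier $s\in R\subset R[[x]]$ coincides with the ring described in Theorem~\ref{power}, and that $K_{s_0}(R)=K_s(R)$ because $s_0=s$. I would briefly remark that $R[[x]]$ is again a commutative local ring (its Jacobson radical is $J(R)+xR[[x]]$), so the hypotheses of Theorem~\ref{power} are met.

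Then the conclusion is immediate: by Theorem~\ref{power}, $K_s\bigl(R[[x]]\bigr)$ is quasipolar if and only if $K_{s_0}(R)$ is quasipolar, and since $s_0=s$, this is exactly the statement that $K_s(R)$ is quasipolar. There is no real obstacle here; the entire content of the corollary is that the equivalence in Theorem~\ref{power} specializes cleanly when the multiplier is already a constant, and the only thing to verify is the bookkeeping identification of the rings involved.
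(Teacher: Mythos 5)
Your proposal is correct and matches the paper's intent exactly: the corollary is stated as an immediate specialization of Theorem~\ref{power} to a constant multiplier $s=s_0\in R\subset R[[x]]$, which is precisely the reduction you carry out.
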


\begin{cor}\label{cor3}Let $R$ be a commutative local ring.
Then the following holds.
\begin{enumerate}
    \item $K_0(R)$ is quasipolar.
    \item $K_s(R[[x]])$ is quasipolar for all $s\in xR[[x]]$.
\end{enumerate}
\end{cor}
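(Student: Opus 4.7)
The plan is to reduce both parts directly to earlier results in the paper; there is no substantive obstacle beyond spotting the factorization that occurs when $s=0$.

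For part (1), take $s=0$ and apply Theorem~\ref{teo3}. The key observation is that for any $A=\left[\begin{array}{cc} a & b \\ c & d \end{array}\right]\in K_0(R)$ we have $det_0(A)=ad$ and $tr(A)=a+d$, so the polynomial appearing in Theorem~\ref{teo3}(2)(ii) factors trivially:
\[ x^2-tr(A)x+det_0(A) \;=\; x^2-(a+d)x+ad \;=\; (x-a)(x-d). \]
Hence the equation $x^2-tr(A)x+det_0(A)=0$ is solvable in $R$ for \emph{every} $A\in K_0(R)$, with both $a$ and $d$ serving as roots. In particular, condition (2)(ii) of Theorem~\ref{teo3} is satisfied whenever $det_0(A)\in J(R)$, so $K_0(R)$ is quasipolar. (If one prefers to trace through cases: when $ad\in J(R)$, locality of $R$ forces $a\in J(R)$ or $d\in J(R)$, and the corresponding root lies in $J(R)$ while the other lies in $U(R)$, matching the dichotomy used elsewhere in the section.)

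For part (2), write $s=\sum_{i=0}^{\infty} s_i x^i\in xR[[x]]$, which is exactly to say $s_0=0$. Theorem~\ref{power} then identifies the quasipolarity of $K_s\!\big(R[[x]]\big)$ with that of $K_{s_0}(R)=K_0(R)$. Combining this equivalence with part (1) yields that $K_s\!\big(R[[x]]\big)$ is quasipolar for every $s\in xR[[x]]$. Note that one cannot invoke Corollary~\ref{cor2} here, since it requires $s\in R$; the full strength of Theorem~\ref{power} is needed to handle power-series multipliers with vanishing constant term.

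Since both Theorem~\ref{teo3} and Theorem~\ref{power} are already in place, the entire argument reduces to the elementary factorization of $x^2-(a+d)x+ad$ in part (1), and there is no real obstacle to overcome.
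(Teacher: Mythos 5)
Your argument is correct, and part (2) coincides exactly with the paper's (invoke Theorem~\ref{power} with $s_0=0$ and reduce to part (1)). For part (1), however, you take a genuinely different and somewhat cleaner route than the paper. The paper reduces (via Theorem~\ref{teo1} and Lemma~\ref{temel1}) to the case $det_0(A)=ad\in J(R)$, $tr(A)=a+d\in U(R)$, then conjugates $A$ explicitly by $P=\left[\begin{smallmatrix} 1 & 0\\ c(a-d)^{-1} & -1 \end{smallmatrix}\right]$ to the upper triangular matrix $\left[\begin{smallmatrix} a & -b\\ 0 & d \end{smallmatrix}\right]$ and appeals to Lemma~\ref{lem6}. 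You instead observe that $x^2-tr(A)x+det_0(A)=(x-a)(x-d)$ always splits over $R$, so condition (2)(ii) of Theorem~\ref{teo3} holds for every $A$, and the criterion does all the work. Your version avoids the matrix computation and the detour through Lemma~\ref{lem6} entirely, at the cost of leaning on the full strength of Theorem~\ref{teo3}; the paper's version is more hands-on and exhibits the similarity explicitly. Your parenthetical remark about the root dichotomy (one root in $J(R)$, one in $U(R)$, by locality) is not needed for Theorem~\ref{teo3} as stated, but it is accurate and consistent with how Theorem~\ref{teo2} is actually applied. Both proofs are sound.
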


\begin{proof} $(1)$ Let $A=\left[%
\begin{array}{cc}
 a & b\\
 c & d \\
\end{array}%
\right]\in K_0(R)$ where $a,b,c,d\in R$. By Theorem~\ref{teo3}, we
can assume that $det_0(A)=ad\in J(R)$ and $tr(A)=a+d\in U(R)$.
This implies that $a-d\in U(R)$. Choose $P=\left[%
\begin{array}{cc}
 1 & 0\\
 c(a-d)^{-1} & -1 \\
\end{array}%
\right]$. Hence $P\in U\big(K_0(R)\big)$ by Lemma~\ref{temel1}(2) and a simple computation shows that $P^{-1}AP=\left[%
\begin{array}{cc}
 a & -b\\
 0 & d \\
\end{array}%
\right]$. By Lemma~\ref{lem5} and Lemma~\ref{lem6}, $A$ is
quasipolar.  $(2)$ is obvious from $(1)$ and Theorem~\ref{power}.
\end{proof}

Note that $M_2(\Bbb Z_{(2)})$ is not quasipolar but $K_0(\Bbb
Z_{(2)})$ is quasipolar by Corollary~\ref{cor3}.

\begin{thm}\label{teo5}Let $R$ be a commutative local ring with
$s=\sum\limits_{i=0}^{n-1}s_ix^i\in R[x]/(x^n)$, where $n\geq 1$.
The following are equivalent.
\begin{enumerate}
\item $K_s\big(R[x]/(x^n)\big)$ is quasipolar.
\item $K_{s_0}(R)$ is quasipolar.
\end{enumerate}
In particular, for all $1\leq m \leq n$,
$K_{x^m}\big(R[x]/(x^n)\big)$ is quasipolar.
\end{thm}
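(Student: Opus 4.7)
The plan is to mirror the proof of Theorem \ref{power}, replacing $R[[x]]$ by the truncated polynomial ring $S := R[x]/(x^n)$. Since $R$ is a commutative local ring, $S$ is a commutative local ring too, with $J(S)=J(R)+xS$, and $f \in S$ is a unit iff its constant term lies in $U(R)$. With these standard facts in hand, the proof splits into proving an analog of Lemma \ref{root} for $S$ and then copying the argument of Theorem \ref{power}.

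The key step is to establish the following truncated analog of Lemma \ref{root}: for $A(x) \in K_s(S)$, the equation $t^2 - tr(A(x))t + det_s(A(x)) = 0$ has a root in $J(S)$ and a root in $U(S)$ if and only if the reduced equation $t^2 - tr(A(0))t + det_{s_0}(A(0)) = 0$ has a root in $J(R)$ and a root in $U(R)$. The ($\Rightarrow$) direction is immediate by evaluating at $x=0$. For ($\Leftarrow$), write $y = \sum_{i=0}^{n-1} b_i x^i$, $tr(A(x)) = \sum \mu_i x^i$, $det_s(A(x)) = \sum \lambda_i x^i$, and expand $y^2 - tr(A(x))y + det_s(A(x)) = 0$ coefficient-by-coefficient. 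The constant term gives $b_0^2 - \mu_0 b_0 + \lambda_0 = 0$, which is solvable by assumption; set $b_0 = \alpha$ for the root in $J(R)$ (respectively $b_0=\beta$ for the root in $U(R)$). For $k \geq 1$, isolating $b_k$ yields $(2b_0 - \mu_0) b_k = P_k(b_0,\ldots,b_{k-1})$ for a certain polynomial $P_k$. Since $2b_0 - \mu_0 = \alpha - \beta$ is a unit in $R$ (the difference of an element of $J(R)$ and a unit), each $b_k$ is uniquely determined. Because the system terminates at $k = n-1$, we get a genuine element of $S$, lying in $J(S)$ when $b_0 = \alpha$ and in $U(S)$ when $b_0 = \beta$.

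Granted this lemma, the equivalence (1)$\Leftrightarrow$(2) now follows word-for-word from the proof of Theorem \ref{power}, using Theorem \ref{teo3} to translate quasipolarity of $K_s(S)$ and $K_{s_0}(R)$ into the appropriate root-existence condition, and using the observation that $det_s(A(x)) \in J(S)$ forces $det_{s_0}(A(0)) \in J(R)$ (and conversely, if the constant term of $A$ already lies in $K_{s_0}(R)$).

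For the \emph{in particular} clause, fix $1 \leq m \leq n$ and take $s = x^m$; then $s_0 = 0$. By the equivalence just proved, $K_{x^m}(R[x]/(x^n))$ is quasipolar iff $K_0(R)$ is quasipolar, and Corollary \ref{cor3}(1) already guarantees the latter over any commutative local ring. The main obstacle in the whole argument is solving the triangular coefficient system in the truncated setting, but since the invertibility of $\alpha - \beta$ lives in $R$ rather than in $S$ and only finitely many equations are involved, no additional completeness or convergence argument is needed; the proof is genuinely simpler than that of Theorem \ref{power}.
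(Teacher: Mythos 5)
Your proposal is correct and matches the paper exactly: the paper's own proof of this theorem is the single line ``Similar to the proof of Theorem~\ref{power},'' and what you have written is precisely that adaptation, with the truncated analog of Lemma~\ref{root} worked out and the \emph{in particular} clause handled via Corollary~\ref{cor3}(1). Your observation that the finite triangular system makes the truncated case even easier than the power series case is also accurate.
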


\begin{proof} Similar to the proof of Theorem~\ref{power}.
\end{proof}

\begin{thm}\label{teo4}Let $R$ be a commutative local ring. If $s\in R$ is nilpotent, then $K_s(R)$ is quasipolar.
\end{thm}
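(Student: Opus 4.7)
The plan is to apply Theorem~\ref{teo6} and thereby reduce the problem to solving a single quadratic in $R$. Since $s$ is nilpotent in the local ring $R$, we have $s\in J(R)$, so Theorem~\ref{teo6} applies. By the equivalence (1)$\Leftrightarrow$(3) of that theorem, it suffices to verify that for every $u\in U(R)$ and every $w\in J(R)$ the equation
\[
t^2-(1+w)t+(w-su)=0
\]
admits a root in $R$.

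I would substitute $t=1+\alpha$, which turns the equation into
\[
\alpha^2+(1-w)\alpha=su,
\]
with $1-w\in U(R)$. Modulo the nilpotent element $s$ this reads $\alpha(1-w+\alpha)\equiv 0$ and has $\alpha\equiv 0$ as an approximate root, with the two mod-$s$ roots separated by the unit $1-w$; this is precisely the setting for a Hensel/Newton-type lift. Concretely, pick $n$ with $s^n=0$, set $\alpha_0=0$, and iterate $\alpha_{k+1}=su\,(1-w+\alpha_k)^{-1}$. By induction, each $\alpha_k$ lies in $(s)\subseteq J(R)$, so $1-w+\alpha_k\in U(R)$ and the recursion is well-defined; moreover the identity
\[
\alpha_{k+1}-\alpha_k=\frac{su\,(\alpha_{k-1}-\alpha_k)}{(1-w+\alpha_k)(1-w+\alpha_{k-1})}
\]
forces (inductively) $\alpha_{k+1}-\alpha_k\in (s^{k+1})$. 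Hence $\alpha_n=\alpha_{n+1}$, so $\alpha:=\alpha_n$ is a genuine fixed point satisfying $\alpha(1-w+\alpha)=su$, and $t=1+\alpha$ solves the original equation. Invoking Theorem~\ref{teo6} then yields that $K_s(R)$ is quasipolar.

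The main obstacle is really only the bookkeeping in this Hensel lift: confirming that the $\alpha_k$ remain in $J(R)$ (so one may legitimately invert $1-w+\alpha_k$) and that successive differences pick up an additional factor of $s$ at each step. Both reduce to clean inductions based on the displayed identity and the fact that $s\in J(R)$, and I anticipate no further subtleties.
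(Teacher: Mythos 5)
Your proof is correct, and it takes a somewhat different (and more self-contained) route than the paper. The paper also reduces to the solvability of a quadratic with the ``right'' shape, but it does so via Lemma~\ref{lem8}: it replaces $A$ by $\left[\begin{smallmatrix} u & 1\\ v & w\end{smallmatrix}\right]$, normalizes the quadratic $t^2-(u+w)t+(uw-sv)=0$ to $t^2-t+(uw-sv)(u+w)^{-2}=0$, and then simply \emph{quotes} the solvability of such equations from the proof of \cite[Theorem 22]{TZ}, finishing with Theorem~\ref{teo2}. You instead enter through the equivalence $(1)\Leftrightarrow(3)$ of Theorem~\ref{teo6} (legitimate, since a nilpotent $s$ lies in $J(R)$, and that theorem's proof does not depend on the present one), and then actually prove the required solvability of $t^2-(1+w)t+(w-su)=0$ by an explicit finite Hensel/fixed-point iteration: your substitution $t=1+\alpha$ gives $\alpha(1-w+\alpha)=su$, whose reduction mod $s$ has the simple root $\alpha\equiv 0$ with unit ``derivative'' $1-w$, and the difference identity shows the iteration stabilizes after $n$ steps because $s^n=0$. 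The two checks you flag (that each $\alpha_k\in(s)\subseteq J(R)$, so the inverses exist, and that successive differences gain a factor of $s$) are exactly the inductions your displayed recursion delivers, so there is no gap. What your version buys is transparency: it makes explicit precisely where nilpotence of $s$ is used (termination of the lift), and it avoids importing an unproved solvability claim from the interior of another paper's proof -- which is a genuine improvement, since the statement quoted in the paper's proof (``$t^2-ut-w=0$ is solvable for all $u\in 1+J(R)$, $w\in J(R)$'') is not true for arbitrary commutative local rings as literally written and only holds in the restricted form your computation actually establishes.
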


\begin{proof} Let $A\in K_s(R)$. We may assume that $A\notin U\big(K_s(R)\big)$ and $A\notin
\big(K_s(R)\big)^{qnil}$. According to Lemma~\ref{lem8}, $A$ is
similar to $\left[%
\begin{array}{cc}
 u & 1\\
 v & w \\
\end{array}%
\right]$ or $\left[%
\begin{array}{cc}
 w & 1\\
 v & u \\
\end{array}%
\right]$ where $u,v\in U(R)$ and $w\in J(R)$. Without loss of
generality, $A\sim B=\left[%
\begin{array}{cc}
 u & 1\\
 v & w \\
\end{array}%
\right]$. Note that $t^2-tr(B)t+det_s(B)=t^2-(u+w)t+(uw-sv)=0$ if
and only if
$\big((u+w)t\big)^2-(u+w)\big((u+w)t\big)+uw-sv=(u+w)^2[t^2-t+(uw-sv)(u+w)^{-2}]=0$.
In proof of \cite[Theorem 22]{TZ}, it is proved that the equation
$t^2-ut-w=0$ is solvable in $R$ for all $u\in 1 + J(R)$ and $w\in
J(R)$. Then the equation $t^2-t+(uw-sv)(u+w)^{-2}=0$ is solvable
in $R$ and so the equation $t^2-tr(B)t+det_s(B)=0$ is solvable in
$R$. In view of Theorem~\ref{teo2}, $A$ is quasipolar and so
$K_s(R)$ is quasipolar.
\end{proof}

For a commutative local ring $R$, $K_s(R)$ is quasipolar for some
non-nilpotent elements $s$ in $J(R)$, by Corollary~\ref{cor3}.

\begin{ex}\label{ex1} \rm{ If  $R=\Bbb Z_{(p)}[x]/(x^2)$, then
$K_s(R)$ is quasipolar if and only if $s\in R^{nil}$.}
\end{ex}

\pagebreak

\end{document}